\newtheorem{lemma}{Lemma}
\newtheorem{preposition}{Preposition}
\newtheorem{assumption}{Assumption}
\numberwithin{equation}{section}
\newtheoremstyle{named}{}{}{\itshape}{}{\bfseries}{.}{.5em}{\thmnote{#3 }#1}
\theoremstyle{named}
\newtheorem*{namedtheorem}{Theorem}
\begin{document}
	\pagestyle{plain}

\title{\LARGE\textbf{The Global Existence and Uniqueness of Maxwell-Chern-Simons-Higgs Equation in (2+1) Dimensions}}

\author[1]{Mulyanto\footnote{Corresponding author}}
\author[1]{Ardian N. Atmaja}
\author[2]{Fiki T. Akbar}
\author[2]{Bobby E. Gunara}

\affil[1]{\textit{\small Research Center for Quantum Physics, National Research and Innovation Agency (BRIN),
Kompleks PUSPIPTEK Serpong, Tangerang 15310, Indonesia}}
\affil[2]{\textit{\small Theoretical Physics Laboratory,
	Theoretical High Energy Physics Research Division,
	Faculty of Mathematics and Natural Sciences,
	Institut Teknologi Bandung
	Jl. Ganesha no. 10 Bandung, Indonesia, 40132}}

\date{\small email: muly031@brin.go.id, ardian\textunderscore n\textunderscore a@yahoo.com, ftakbar@itb.ac.id, bobby@itb.ac.id}

\maketitle

\begin{abstract}
In this paper, we show the global existence and uniqueness of classical solutions of the Maxwell-Chern-Simmons-Higgs system coupled to a neutral scalar with nontrivial scalar potential on (2+1) dimensional Minkowski spacetime. Our methods rely only on classical existence theorems, including energy estimates, the Sobolev inequality, and the choice of the Coulomb gauge condition. The equations are well-posed for finite initial data and the solution preserves any additional $H^{s}$ regularity for $s>0$ in the data. 
\end{abstract}

\section{Introduction}
In this paper we will construct the proof of the global existence and uniqueness of the classical solutions of the Maxwell-Chern-Simons-Higgs (MCSH) system coupled to a neutral scalar with scalar potential on $(2+1)$ dimensional Minkowski spacetime  with metric $ \eta_{\mu\nu}={\text{diag}}\left(-,+,+\right)$ which is described by a Lagrangian density of the form \cite{kim}: 
\begin{eqnarray}\label{lagrange}
  \mathcal{L}=-\frac{1}{4}{{F}_{\mu \nu }}{{F}^{\mu \nu }}-\frac{\kappa }{4}{{\epsilon }^{\mu \rho \sigma }}{{A}_{\mu }}{{F}_{\rho \sigma }}+\frac{1}{2}{{\partial }_{\mu }}N{{\partial }^{\mu }}N+|{{D}_{\mu }}\phi {{|}^{2}}-V(|\phi |, N)~.
\end{eqnarray}
Here, the electromagnetic field strength tensor is defined as $F_{\mu\nu} \equiv \partial_\mu A_\nu - \partial_\nu A_\mu $, $N$ represents the real scalar field, $\kappa>0$ is the Chern-Simons constant, and the covariant derivative of the complex Higgs field is given by $ D_\mu \phi \equiv \partial_\mu \phi + i A_\mu \phi $. We adopt the Einstein summation convention with Greek indices $\mu, \nu, \rho, \sigma$ ranging over $0, 1, 2$ and Latin indices $i,j,k$ running over $1, 2$. To ensure there is no confusion about index notation in the following discussion, we also use the indices $m$ and $q$ to indicate the degree of a polynomial as in \eqref{V4} and $\Tilde{k}$ is the $k$-th order of the derivative with respect to spatial coordinate as in \eqref{J}. In this analysis, the potential $ V(|\phi|,N) $ was initially considered as an arbitrary function. The fields equations of motion derived from Lagrangian \eqref{lagrange} are
 \begin{eqnarray}\label{eom1}
     {{\partial }_{\mu }}{F}^{\mu \nu }=-\operatorname{Im}\left( \phi \overline{{{D}^{\nu }}\phi } \right)+\kappa {{F}^{\nu }}~,
 \end{eqnarray}
 \begin{eqnarray}\label{eom2}
     D_{\mu}D^{\mu}\phi={{\partial }_{\phi }}V~,
 \end{eqnarray}
 \begin{eqnarray}\label{eom3}
     {{\partial }_{\mu }}{{\partial }^{\mu }}N+{{\partial }_{N}}V=0.
 \end{eqnarray}
where ${{F}^{\nu }}=\frac{1}{2}{{\epsilon }^{\nu \rho \sigma }}{{F}_{\rho \sigma }}$ is the dual electromagnetic vector. 

The MCSH model is one of important topics in planar physics that could provide the study of particles that possess magnetic and electric charges.  In this area, the study of the self-dual property of the system in (2 + 1)-dimensional gauge field theory is crucial since it provides a system of first-order equations that minimizes static energy \cite{han2}. These equations are also known as the BPS equations \cite{bogo, prasad}. The most basic models that have a self-dual structure are the Maxwell-Higgs (MH) model \cite{casana, beze2, beze3} and the Chern-Simons-Higgs (CSH) model \cite{jack, hong, dos, dos2, dos3}. On the other hand, one might consider combining the Maxwell field with the Chern-Simons interactions into a single model. However, this combination lacked a self-dual structure. This issue was addressed in \cite{Lee} by merging these interactions using a supersymmetry-based approach, aided by the introduction of an additional scalar field $N$, known as the neutral scalar field. In this way, they introduced the Maxwell-Chern-Simons-Higgs (MCSH) model, which implies a self-dual structure \cite{han}. Recent studies also show that the introduction of non-minimal couplings leads to a generalized Maxwell-Chern-Simons-Higgs (GMCSH) model \cite{andrade, bezeia, yuda}.

For the Cauchy problem of the MCSH model in (2+1)-Minkowski spacetime, the global existence of the system in the Lorentz gauge condition has been proven in \cite{chae}. The method is based on the classical result of \cite{Moncrief} on MH fields by demonstrating that an appropriately defined higher-order energy, though not strictly conserved, does not blow up in finite time. This method is also used for more complex systems such as the Yang-Mills-Higgs system \cite{edrey}. However, this method does not necessarily provide global regularity of smooth solutions to these equations. This problem, especially for Maxwell-Klein-Gordon cases, was solved by \cite{Klainerman} using a new estimate for a nonlinear wave equation that relies on the Coulomb gauge condition. As for the MCSH system, the global existence and uniqueness problem for $H^{s}$ with $s>0$ has to be solved. However, in contrast to \cite{Klainerman}, the MCSH system contains a Chern-Simons term, which introduces additional gauge terms. This term significantly changes the structure of the equations and requires a different approach to handle nonlinearities and their estimates.

Since all of the methods above provide gauge conditions, it is fundamental to show that the proposed system remains invariant under the gauge transformations $U(1)$.
\begin{eqnarray}
    \phi\to e^{i\chi}\phi \hspace{2em} A_\mu\to A_\mu-\partial_\mu\chi~.
\end{eqnarray}
We can replace $A_\mu$ with $ A_\mu-\partial_\mu\chi$ and derive the same $F_{\mu\nu}$. Correspondingly, we replace ${{D}_{\mu }}={{\partial }_{\mu }}+i{{A}_{\mu }}$ with $e^{i\chi}D_\mu e^{-i\chi}={{\partial }_{\mu }}+i\left( {{A}_{\mu }}-{{\partial }_{\mu }}\chi  \right)$. Therefore, if we replace $A_\mu$ with $ A_\mu-\partial_\mu\chi$ and $\phi$ with $e^{-i\chi}\phi$ we can obtain the same $D_\mu\phi$. All that remains is to check the invariance of the Chern-Simons term. This can be done by the assumption that, on a non-compact manifold like Minkowski spacetimes, the field at the boundary, $x\to\infty$, can be regarded as vanishing. Consequently, a key aspect of our results is the selection of a gauge condition, specifically the Coulomb gauge. More precisely, we construct solutions to the system \eqref{eom1}-\eqref{eom2} that satisfy the gauge condition 
\begin{eqnarray}\label{CG}
    \partial^i A_i=0~.
\end{eqnarray}
By coupling \eqref{CG} with \eqref{eom1} and \eqref{eom2}, we obtain
\begin{eqnarray}\label{eomCG1}
    \Delta {{A}^{0}}= \kappa {{F}^{0}}-\operatorname{Im}\left( \phi \overline{{{D}^{0}}\phi } \right)~,
\end{eqnarray}
\begin{eqnarray}\label{eomCG2}
    \Box{{A}^{i}}= \kappa {{F}^{i}}-\operatorname{Im}\left( \phi \overline{{{D}^{i}}\phi } \right) +{{\partial }^{i}}{{\partial }_{0}}{{A}^{0}}~,
\end{eqnarray}
\begin{eqnarray}\label{eomCG3}
   \Box\phi &=&-{{\partial }_{\phi }}V -2i{{A}_{\mu }}{{D}^{\mu }}\phi-i{{\partial }_{\mu }}\left( {{A}^{\mu }}\phi  \right) ~. 
\end{eqnarray}
Here, we also split the gauge potential $A_\mu$ into its time component $A_0$ and its spatial component $A = \left( A_i \right)_{i=1,2}$. For a function $\phi(t,x)$, we use $\nabla \phi = \left( \partial_i \phi \right)_{i=1,2}$ to denote its spatial gradient and $\partial \phi = \left( \partial_0, \nabla \phi \right)$ to represent its full space-time gradient. The D’Alembertian operator, denoted by $\Box$, is defined as $\Box = -\partial_t^2 + \Delta = -\partial_t^2 + \partial_1^2 + \partial_2^2$. Given that $\partial^i A_i = 0$, we find that $\nabla \times (\nabla \times A) = -\Delta A$, where $\nabla \times A$ represents the standard curl operation for vector fields in $\mathbb{R}^2$. Explicitly, $(\nabla \times A)^i = \epsilon^{ijk} \partial_j A_k$. Let $\mathcal{P}$ be the projection operator onto divergence-free vector fields. For any vector field $B$, we define it by
\begin{eqnarray}
    \mathcal{P} B \equiv \Delta^{-1} (\nabla \times (\nabla \times B))
\end{eqnarray}
It’s clear that if $B$ is divergence-free ($\partial^i B_i = 0$), then $\mathcal{P} B = -B$. With this in mind, equation \eqref{eomCG2} implies the following
\begin{eqnarray}\label{eomCG2P}
    \Box{{A}^{i}}=\mathcal{P}\left\{ \kappa {{F}^{i}}-\operatorname{Im}\left( \phi \overline{{{D}^{i}}\phi } \right) \right\}~.
\end{eqnarray}
Remark that equation \eqref{CG} always holds for all time if the initial data $A_i(0, \cdot)$ and $\partial_t A_i(0, \cdot)$ are divergence-free. 

It is believed that under the Coulomb gauge condition, the MCSH system \eqref{eom3}, \eqref{eomCG1}, \eqref{eomCG3} and \eqref{eomCG2P} also ensure global regularity solutions. In this study, we demonstrate that by applying the method from \cite{Klainerman}, the MCSH system admits a globally unique solution while preserving $H^{s}$ regularity with $s>0$. Following this method, we define the functional energy required for the estimates and state our main theorem in section \ref{sec:MCSH energy}. In section \ref{sec:MCSH estimate}, we demonstrate the estimates for $A_0, A, N$ and $\phi$ that use to prove the global existence and uniqueness for the MCSH systems. Finally, we prove the main theorem of this work in Section \ref{sec:MCSH prove}.

\section{The Functional Energy}\label{sec:MCSH energy}

The electromagnetic field $F_{\mu\nu}$ can be decomposed into its electric and magnetic components,
 \begin{eqnarray}
     E_i=F_{0i}\hspace{2em} B_i=\Tilde{F}_{0i}=\frac{1}{2}\varepsilon_{ijk}F^{jk}~.
 \end{eqnarray}
 The total energy at a given time is given by the expression
\begin{eqnarray}
    {\mathcal{E}}(t)=\int\limits_{{{\mathbb{R}}^{2}}}{{{T}^{00}}dx}=\int\limits_{{{\mathbb{R}}^{2}}}{\left\{ \frac{1}{2}\left( {{E}_{i}}^{2}+H_{i}^{2} \right)+\left|\partial_0N\right|^2+\frac{1}{2}\left( {{\left| {{D}_{0}}\phi  \right|}^{2}}+{{\left| {{D}_{i}}\phi  \right|}^{2}} \right)+V \right\}dx}~.
\end{eqnarray}
We analyze the initial value problem at $ t = 0 $ where the initial data $ E(0, \cdot) $, $ H(0, \cdot) $, $\phi(0, \cdot)$ and $\partial_t \phi(0, \cdot) $ are provided. Specifically, we examined the time evolution of the system described by \eqref{eom1}–\eqref{eom2} for data with finite energy. In other words, we assume that the data satisfies the condition
\begin{eqnarray}
    \mathcal{E}(0)<\infty~.
\end{eqnarray}
In particular, we have
\begin{eqnarray}\label{A}
   {{\left\| {{\partial }_{i}}{{A}_{\mu }} \right\|}_{{{L}^{2}}\left( {{\mathbb{R}}^{2}} \right)}}\le {{\left\| E \right\|}_{{{L}^{2}}\left( {{\mathbb{R}}^{2}} \right)}}+{{\left\| H \right\|}_{{{L}^{2}}\left( {{\mathbb{R}}^{2}} \right)}}~.
\end{eqnarray}
In view of \eqref{A}, we introduce a functional energy norm 
\begin{eqnarray}\label{J}
  \mathcal{J}\left( t \right)& =& {{\left\| {{\partial }_{\mu }}{{A}_{\nu }} \right\|}_{{{L}^{2}}\left( {{\mathbb{R}}^{2}} \right)}}+{{\left\| \phi  \right\|}_{{{L}^{2}}\left( {{\mathbb{R}}^{2}} \right)}}+{{\left\| {{\partial }_{\mu }}\phi  \right\|}_{{{L}^{2}}\left( {{\mathbb{R}}^{2}} \right)}}+{{\left\| {{\partial }_{\mu }}\phi  \right\|}_{{{L}^{2}}\left( {{\mathbb{R}}^{2}} \right)}}+{{\left\| N  \right\|}_{{{L}^{2}}\left( {{\mathbb{R}}^{2}} \right)}} \notag\\ 
 & & +{{\left\| {{\partial }_{\mu }}N  \right\|}_{{{L}^{2}}\left( {{\mathbb{R}}^{2}} \right)}}+\sum\limits_{\Tilde{k}=2}^{4}{{{\left\| {{\partial_i}^{\Tilde{k}}}A \right\|}_{{{L}^{2}}\left( {{\mathbb{R}}^{2}} \right)}}}+\sum\limits_{\Tilde{k}=2}^{4}{{{\left\| {{\partial_i }^{\Tilde{k}}}\phi  \right\|}_{{{L}^{2}}\left( {{\mathbb{R}}^{2}} \right)}}}~.
\end{eqnarray}
where $\Tilde{k}$ is the $k$-th order of the derivative. By applying the law of conservation of total energy, we can demonstrate that functional energy $\mathcal{J}(t) $  is bounded for all time $t>0$. In fact, we have the following preposition.
\begin{preposition}\label{prepos1}
Let $\phi, A_0, A$ be a smooth solution of the MCSH system as in equations \eqref{eom1}-\eqref{eom2} under the Coulomb gauge condition \eqref{CG}. Suppose that the initial energy $ \mathcal{J}_0 = \mathcal{J}(0) $ is finite:
    \begin{eqnarray}
        \mathcal{J}_0<\infty~.
    \end{eqnarray}
Then, for all $t\geq 0$, there exists a positive constant $C$, dependent only on $ \mathcal{J}_0 $, such that the following inequality holds
\begin{eqnarray}
       \mathcal{J}(t)\le C\left(1+t\right)^2~.
\end{eqnarray}
\end{preposition}
\begin{proof}
Clearly that ${{\left\| {{\partial }_{\mu }}N  \right\|}_{{{L}^{2}}\left( {{\mathbb{R}}^{2}} \right)}}\le\mathcal{J}_0$. From equation \eqref{A}, we can also demonstrate that
\begin{eqnarray}\label{L2derifN}
  {{\left\| {{\partial }_{\mu }}{{A}_{\nu }} \right\|}_{{{L}^{2}}}}&\le& {{\left\| E\left( t,\cdot  \right) \right\|}_{{{L}^{2}}}}+{{\left\| H\left( t,\cdot  \right) \right\|}_{{{L}^{2}}}} \notag\\ 
 & \le& {{\mathcal{E}}^{1/2}} \notag\\ 
 & \le& {{\mathcal{J}}_{0}}  ~.
\end{eqnarray}
To estimate the second term of \eqref{J}, let us consider
\begin{eqnarray}
   \frac{d}{dt}\left\| \phi  \right\|_{{{L}^{2}}\left( {{\mathbb{R}}^{2}} \right)}^{2}&\le& 2\int\limits_{{{\mathbb{R}}^{n}}}{\Re \left[ \phi \overline{{{D}_{0}}\phi } \right]}dx \notag\\ 
 & \le& {{\left\| \phi  \right\|}_{{{L}^{2}}\left( {{\mathbb{R}}^{2}} \right)}}{{\left\| {{D}_{0}}\phi  \right\|}_{{{L}^{2}}\left( {{\mathbb{R}}^{2}} \right)}} \notag\\ 
 & \le& {{\mathcal{E}}^{1/2}}{{\left\| \phi  \right\|}_{{{L}^{2}}\left( {{\mathbb{R}}^{2}} \right)}}~.  
\end{eqnarray}
Consequently, we have
\begin{eqnarray}\label{L2phi}
 {{\left\| \phi  \right\|}_{{{L}^{2}}\left( {{\mathbb{R}}^{2}} \right)}}\le {{\mathcal{J}}_{0}}\left( 1+t \right)~.
\end{eqnarray}
In the same way, we also have
\begin{eqnarray}\label{L2derifA}
 {{\left\| A \right\|}_{{{L}^{2}}\left( {{\mathbb{R}}^{2}} \right)}}\le {{\mathcal{J}}_{0}}\left( 1+t \right)~,
\end{eqnarray}
\begin{eqnarray}\label{L2N}
 {{\left\| N \right\|}_{{{L}^{2}}\left( {{\mathbb{R}}^{2}} \right)}}\le {{\mathcal{J}}_{0}}\left( 1+t \right)~.
\end{eqnarray}
To get the estimate for ${\left\|\nabla \phi \right\|}_{{L}^{2}}$, we use the Minkowski inequality along with H\"older inequality and Gagliardo-Nirenberg-Sobolev inequality to obtain
\begin{eqnarray}
   {{\left\| \nabla \phi  \right\|}_{{{L}^{2}}}}&\le& {{\left\| {{D}_{i}}\phi  \right\|}_{{{L}^{2}}}}+{{\left\| {{A}_{i}}\phi  \right\|}_{{{L}^{2}}}} \notag\\ 
 & \le& {{\mathcal{J}}_{0}}+{{\left\| {{A}_{i}} \right\|}_{{{L}^{6}}}}{{\left\| \phi  \right\|}_{{{L}^{3}}}} \notag\\ 
 & \le & {{\mathcal{J}}_{0}}+\left\| \nabla {{A}_{i}} \right\|_{{{L}^{2}}}^{2/3}\left\| {{A}_{i}} \right\|_{{{L}^{2}}}^{1/3}\left\| \phi  \right\|_{{{L}^{2}}}^{2/3}\left\| \nabla \phi  \right\|_{{{L}^{2}}}^{1/3} \notag\\ 
 & \le & {{\mathcal{J}}_{0}}+{{\mathcal{J}}_{0}}^{5/3}\left( 1+t \right)\left\| \nabla \phi  \right\|_{{{L}^{2}}}^{1/3} ~, 
\end{eqnarray}
which lead to
\begin{eqnarray}\label{L2defphi}
    {{\left\| \nabla\phi  \right\|}_{{{L}^{2}}}}\le C(\mathcal{J}_0)~.
\end{eqnarray}
The same procedure also apply for
\begin{eqnarray}
   {{\left\| {{\partial }_{0}}\phi  \right\|}_{{{L}^{2}}}}&\le& {{\left\| {{D}_{0}}\phi  \right\|}_{{{L}^{2}}}}+{{\left\| {{A}_{0}}\phi  \right\|}_{{{L}^{2}}}} \notag\\ 
 & \le& {{\mathcal{J}}_{0}}+{{\left\| {{A}_{0}} \right\|}_{{{L}^{6}}}}{{\left\| \phi  \right\|}_{{{L}^{3}}}} \notag\\ 
 & \le& {{\mathcal{J}}_{0}}+\left\| \nabla {{A}_{0}} \right\|_{{{L}^{2}}}^{2/3}\left\| {{A}_{0}} \right\|_{{{L}^{2}}}^{1/3}\left\| \phi  \right\|_{{{L}^{2}}}^{2/3}\left\| \nabla \phi  \right\|_{{{L}^{2}}}^{1/3} \notag\\ 
 & \le& {{\mathcal{J}}_{0}}+{{\mathcal{J}}_{0}}^{5/3}\left( 1+t \right)\left\| \nabla \phi  \right\|_{{{L}^{2}}}^{1/3} \notag\\ 
 & \le& {{\mathcal{J}}_{0}}+{{\mathcal{J}}_{0}}^{2}{{\left( 1+t \right)}} \notag\\ 
 & \le& C{{\left( 1+t \right)}} ~. 
\end{eqnarray}
To get estimate for the last two term of \eqref{J}, we have to specify the form of the scalar potential $V(|\phi |, N)$. In this research, we consider the following potential:
\begin{assumption}\label{asumption}
The scalar potential  $V(|\phi |, N)$ in \eqref{lagrange} has to be  of the following form
\begin{eqnarray}
   \label{V4} V\left( \left| \phi  \right|,N \right) = \sum\limits_{m=1}^{M} \sum\limits_{q=1}^{Q} \alpha_{mq} {{\left| \phi  \right|}^{2m}} {{N}^{q}}
\end{eqnarray}
where $\alpha_{mq}$ are real constants. 
\end{assumption}
Hence, we have the following lemma
\begin{lemma}\label{lemmaV}
    Lets $V(|\phi |, N)$ satisfy condition \eqref{V4}, Then, the following inequality holds:
    \begin{eqnarray}
        \label{v41}{{\left\| {{\partial }_{\phi }}V \right\|}_{{{L}^{2}}\left( {{\mathbb{R}}^{2}} \right)}}\le C(1+t)\\
       \label{v42} {{\left\| \nabla{{\partial }_{\phi }}V \right\|}_{{{L}^{2}}\left( {{\mathbb{R}}^{2}} \right)}}\le C(1+t)\\
       \label{v43} {{\left\| \nabla^2{{\partial }_{\phi }}V \right\|}_{{{L}^{2}}\left( {{\mathbb{R}}^{2}} \right)}}\le C(1+t)^2
    \end{eqnarray}
\end{lemma}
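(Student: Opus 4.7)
The plan is to exploit the polynomial structure of $V$ given in Assumption \ref{asumption} and estimate each monomial that arises in $\partial_{\phi}V$, $\nabla\partial_{\phi}V$, and $\nabla^{2}\partial_{\phi}V$ by combining H\"older's inequality with the two-dimensional Gagliardo--Nirenberg--Sobolev (GNS) inequality, with the required inputs supplied by Preposition \ref{prepos1}. Explicitly, $\partial_{\phi}V=\sum_{m,q}m\,\alpha_{mq}\bar{\phi}|\phi|^{2m-2}N^{q}$, so each of the three norms to be estimated is a finite sum of terms of the schematic form $|\phi|^{a}N^{b}(\nabla\phi)^{j_{1}}(\nabla N)^{j_{2}}(\nabla^{2}\phi)^{k_{1}}(\nabla^{2}N)^{k_{2}}$ with total derivative order $0$, $1$ or $2$.

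From Preposition \ref{prepos1} and the definition of $\mathcal{J}(t)$ I have $\|\phi\|_{L^{2}}+\|N\|_{L^{2}}\le C(1+t)$, $\|\nabla\phi\|_{L^{2}}\le C$, $\|\nabla N\|_{L^{2}}\le \mathcal{J}_{0}$, and $\|\nabla^{k}\phi\|_{L^{2}}\le C(1+t)^{2}$ for $k=2,3,4$. The 2D GNS inequality then yields $\|\phi\|_{L^{p}},\|N\|_{L^{p}}\le C(1+t)^{2/p}$ for every $2\le p<\infty$, together with $\|\nabla\phi\|_{L^{p}}\le C\|\nabla\phi\|_{L^{2}}^{2/p}\|\nabla^{2}\phi\|_{L^{2}}^{1-2/p}\le C(1+t)^{2(1-2/p)}$. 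For \eqref{v41} the generic monomial $\||\phi|^{2m-1}N^{q}\|_{L^{2}}$ is split by H\"older as $\|\phi\|_{L^{(2m-1)r_{1}}}^{2m-1}\|N\|_{L^{qr_{2}}}^{q}$ with $1/r_{1}+1/r_{2}=1/2$, and GNS produces the overall factor $(1+t)^{2/r_{1}+2/r_{2}}=(1+t)$. For \eqref{v42} I would perform a three-factor H\"older splitting placing the single first-order derivative in $L^{3}$; the choice $p_{1}=p_{2}=12$, $p_{3}=3$ yields the matched exponent $\tfrac{1}{6}+\tfrac{1}{6}+\tfrac{2}{3}=1$, so that each such monomial is bounded by $(1+t)$.

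For \eqref{v43} the same template is applied to each structural type $|\phi|^{a}N^{b}\nabla^{2}\phi$, $|\phi|^{a}N^{b}(\nabla\phi)^{2}$, $|\phi|^{a}N^{b}\nabla\phi\nabla N$, $|\phi|^{a}N^{b}(\nabla N)^{2}$, and $|\phi|^{a}N^{b}\nabla^{2}N$; the second-order derivative factor (or the square of a first-order one) is interpolated between the $L^{2}$ norms of $\nabla^{2}\phi$ and $\nabla^{3}\phi$, each of which is $O((1+t)^{2})$, while the H\"older exponents $1/p_{1}+1/p_{2}+1/p_{3}=1/2$ are tuned so the polynomial factors absorb the remaining budget. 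The main obstacle is the last structural type $|\phi|^{a}N^{b}\nabla^{2}N$, since $\mathcal{J}(t)$ contains no norm of $\nabla^{2}N$; I would overcome this by invoking \eqref{eom3} to rewrite $\Delta N=\partial_{0}^{2}N-\partial_{N}V$, estimating $\|\partial_{N}V\|_{L^{2}}$ by the very same polynomial--H\"older--GNS machinery used for $\|\partial_{\phi}V\|_{L^{2}}$ in \eqref{v41}, and extracting $\|\partial_{0}^{2}N\|_{L^{2}}$ from a time-differentiated form of \eqref{eom3} paired with the already-controlled first-order energy of $N$. The resulting contribution is absorbed into the $(1+t)^{2}$ bound.
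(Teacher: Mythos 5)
Your core strategy---expand $\partial_\phi V$ into monomials, apply H\"older with tunable exponents, and convert every $L^p$ norm to $L^2$ norms of the fields and their gradients via the two-dimensional Gagliardo--Nirenberg--Sobolev inequality---is exactly the paper's method, and your exponent bookkeeping ($2/r_1+2/r_2=1$ for \eqref{v41}, the $\tfrac16+\tfrac16+\tfrac23$ split for \eqref{v42}) reproduces the paper's powers of $(1+t)$. Two issues, however, need attention. First, you take $\|\nabla^k\phi\|_{L^2}\le C(1+t)^2$ for $k=2,3,4$ as an input ``from Preposition \ref{prepos1},'' but this lemma is a step \emph{inside} the proof of that proposition, and those higher-derivative bounds are themselves obtained only by using \eqref{v41} and \eqref{v42} together with the elliptic/wave structure of \eqref{eom1}--\eqref{eom2}. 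As written your argument is circular. The paper avoids this by a strict bootstrap ordering: prove \eqref{v41} using only first-order data, then bound $\|\nabla^2 A\|_{L^2},\|\nabla^2\phi\|_{L^2}$ from the field equations, then prove \eqref{v42}, then bound $\|\nabla^3 A\|_{L^2},\|\nabla^3\phi\|_{L^2}$, then prove \eqref{v43}. Your proof needs to be reorganized into this order (the estimates themselves survive unchanged).

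Second, you correctly spot that the monomials containing $\nabla N$ and $\nabla^2 N$ require a bound on $\|\nabla^2 N\|_{L^2}$ (and, for \eqref{v43}, on $\|\nabla^3 N\|_{L^2}$) that is absent from $\mathcal{J}(t)$---a point the paper's own proof silently glosses over, since its displayed chains contain $\|\nabla^2 N\|_{L^2}$ and $\|\nabla^3 N\|_{L^2}$ with no justification. Note this already bites in \eqref{v42}, not only in \eqref{v43}: placing $\nabla N$ in $L^3$ costs a factor $\|\nabla^2 N\|_{L^2}^{1/3}$. Your proposed repair, however, does not close: writing $\Delta N=\partial_t^2 N-\partial_N V$ merely trades $\nabla^2 N$ for $\partial_t^2 N$, and ``a time-differentiated form of \eqref{eom3}'' gives back $\partial_t^2 N=\Delta N+\partial_N V$, which is the same uncontrolled quantity. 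The workable route is the standard higher-order energy estimate for the wave equation \eqref{eom3}: since $\nabla\partial_N V$ involves only the fields and their first derivatives, one gets
\begin{equation*}
\left\| \partial\nabla N(t,\cdot)\right\|_{L^{2}}\le \left\| \partial\nabla N(0,\cdot)\right\|_{L^{2}}+\int_{0}^{t}\left\| \nabla\partial_{N}V(s,\cdot)\right\|_{L^{2}}\,ds\le C(1+t)^{2},
\end{equation*}
which supplies $\|\nabla^2 N\|_{L^2}$ (and, iterated once more, $\|\nabla^3 N\|_{L^2}$) with bounds compatible with \eqref{v42}--\eqref{v43}. With that substitution and the bootstrap ordering above, your argument is complete and in fact more careful than the paper's on this point.
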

\begin{proof}
   By using the H\"older and Gagliardo-Nirenberg-Sobolev inequality, we have
   \begin{eqnarray}
   {{\left\| {{\partial }_{\phi }}V \right\|}_{{{L}^{2}}}}&\le& c \sum\limits_{m=1}^{M}{\sum\limits_{q=1}^{Q}{{{\left\| {{\phi }^{2m-1}}{{N}^{q}} \right\|}_{{{L}^{2}}}}}} \notag\\ 
 & \le&c \sum\limits_{m=1}^{M}{\sum\limits_{q=1}^{Q}{{{\left\| {{\phi }^{2m-1}} \right\|}_{{{L}^{4}}}}}}{{\left\| {{N}^{q}} \right\|}_{{{L}^{4}}}}\notag\\ 
 & \le&c \sum\limits_{m=1}^{M}{\sum\limits_{q=1}^{Q}{\left\| \phi  \right\|_{{{L}^{4\left( 2m-1 \right)}}}^{\left( 2m-1 \right)}\left\| N \right\|_{{{L}^{4q}}}^{q}}} \notag\\ 
 & \le&c \sum\limits_{m=1}^{M}{\sum\limits_{q=1}^{Q}{\left\| \nabla \phi  \right\|_{{{L}^{2}}}^{\frac{4m-3}{2}}\left\| \nabla N \right\|_{{{L}^{2}}}^{\frac{2q-1}{2}}\left\| \phi  \right\|_{{{L}^{2}}}^{\frac{1}{2}}\left\| N \right\|_{{{L}^{2}}}^{\frac{1}{2}}}} \notag\\ 
 & \le& C\left( 1+t \right) ~, 
   \end{eqnarray}
   \begin{eqnarray}
   {{\left\| {{\partial }_{N}}V \right\|}_{{{L}^{2}}}}&\le& c\sum\limits_{m=1}^{M}{\sum\limits_{q=1}^{Q}{{{\left\| {{\phi }^{2m}}{{N}^{q-1}} \right\|}_{{{L}^{2}}}}}} \notag\\ 
 & \le& c\sum\limits_{m=1}^{M}{\sum\limits_{q=1}^{Q}{{{\left\| {{N}^{q-1}} \right\|}_{{{L}^{4}}}}{{\left\| {{\phi }^{2m}} \right\|}_{{{L}^{4}}}}}}\notag\\ 
 & \le& c\sum\limits_{m=1}^{M}{\sum\limits_{q=1}^{Q}{\left\| N \right\|_{{{L}^{4\left( 2q-1 \right)}}}^{\left( 2q-1 \right)}\left\| \phi  \right\|_{{{L}^{8m}}}^{2m}}} \notag\\ 
 & \le& c\sum\limits_{m=1}^{M}{\sum\limits_{q=1}^{Q}{\left\| \nabla \phi  \right\|_{{{L}^{2}}}^{\frac{4m-1}{2}}\left\| \phi  \right\|_{{{L}^{2}}}^{\frac{1}{2}}\left\| N \right\|_{{{L}^{2}}}^{\frac{1}{2}}\left\| \nabla N \right\|_{{{L}^{2}}}^{\frac{4q-3}{2}}}} \notag\\ 
 & \le& C\left( 1+t \right)~, 
   \end{eqnarray}
where we have set $c \equiv \sum\limits_{m=1}^{M}{\sum\limits_{q=1}^{Q}|\alpha_{mq}|}$. To prove \eqref{v42}, first we need to estimate for $\Tilde{k}=2$, we use the equation of motion \eqref{eom1} and \eqref{eom2}, hence
\begin{eqnarray}
   {{\left\| {{\nabla }^{2}}A \right\|}_{{{L}^{2}}\left( {{\mathbb{R}}^{2}} \right)}}&\le& {{\left\| {{A}^{2}}\phi  \right\|}_{{{L}^{2}}}}+{{\left\| \nabla A \right\|}_{{{L}^{2}}}} \notag\\ 
 & \le& {{\left\| {{A}^{2}} \right\|}_{{{L}^{4}}}}{{\left\| \phi  \right\|}_{{{L}^{4}}}}+{{\left\| \nabla A \right\|}_{{{L}^{2}}}} \notag\\ 
 & \le& \left\| A \right\|_{{{L}^{8}}}^{2}\left\| \nabla \phi  \right\|_{{{L}^{2}}}^{1/2}\left\| \phi  \right\|_{{{L}^{2}}}^{1/2}+{{\left\| \nabla A \right\|}_{{{L}^{2}}}} \notag\\ 
 & \le& \left\| \nabla A \right\|_{{{L}^{2}}}^{3/2}\left\| A \right\|_{{{L}^{2}}}^{1/2}\left\| \nabla \phi  \right\|_{{{L}^{2}}}^{1/2}\left\| \phi  \right\|_{{{L}^{2}}}^{1/2}+{{\left\| \nabla A \right\|}_{{{L}^{2}}}} \notag\\ 
 & \le& {{\mathcal{J}}_{0}}^{2}\left( 1+t \right)+{{\mathcal{J}}_{0}} \notag\\ 
 & \le& C\left( 1+t \right)~,  
\end{eqnarray}
\begin{eqnarray}
   {{\left\| {{\nabla }^{2}}\phi  \right\|}_{{{L}^{2}}}}&\le& {{\left\| {{\partial }_{\phi }}V \right\|}_{{{L}^{2}}}}+{{\left\| \nabla A\phi  \right\|}_{{{L}^{2}}}}+{{\left\| A\nabla \phi  \right\|}_{{{L}^{2}}}}+{{\left\| {{A}^{2}}\phi  \right\|}_{{{L}^{2}}}} \notag\\ 
 & \le& {{\left\| {{\partial }_{\phi }}V \right\|}_{{{L}^{2}}}}+{{\left\| \nabla A \right\|}_{{{L}^{4}}}}{{\left\| \phi  \right\|}_{{{L}^{4}}}}+{{\left\| A \right\|}_{{{L}^{4}}}}{{\left\| \nabla \phi  \right\|}_{{{L}^{4}}}}+\left\| A \right\|_{{{L}^{8}}}^{2}{{\left\| \phi  \right\|}_{{{L}^{4}}}} \notag\\ 
 & \le& {{\left\| {{\partial }_{\phi }}V \right\|}_{{{L}^{2}}}}+\left\| {{\nabla }^{2}}A \right\|_{{{L}^{2}}}^{1/2}\left\| \nabla A \right\|_{{{L}^{2}}}^{1/2}\left\| \nabla \phi  \right\|_{{{L}^{2}}}^{1/2}\left\| \phi  \right\|_{{{L}^{2}}}^{1/2} \notag\\ 
 & &+\left\| \nabla A \right\|_{{{L}^{2}}}^{1/2}\left\| A \right\|_{{{L}^{2}}}^{1/2}\left\| {{\nabla }^{2}}\phi  \right\|_{{{L}^{2}}}^{1/2}\left\| \nabla \phi  \right\|_{{{L}^{2}}}^{1/2} \notag\\ 
 && +\left\| \nabla A \right\|_{{{L}^{2}}}^{3/2}\left\| A \right\|_{{{L}^{2}}}^{1/2}\left\| \nabla \phi  \right\|_{{{L}^{2}}}^{1/2}\left\| \phi  \right\|_{{{L}^{2}}}^{1/2} \notag\\ 
 & \le& C\left( 1+t \right)~.  
\end{eqnarray}
Therefore,
\begin{eqnarray}
   {{\left\| \nabla \left( {{\partial }_{\phi }}V \right) \right\|}_{{{L}^{2}}}}& \le& \sum\limits_{m=1}^{M}{\sum\limits_{q=1}^{Q}{\left( {{\left\| {{\phi }^{2m-2}}{{N}^{q}}\nabla \phi  \right\|}_{{{L}^{2}}}}+{{\left\| {{\phi }^{2m-1}}{{N}^{q-1}}\nabla N \right\|}_{{{L}^{2}}}} \right)}}\notag\\
  &\le& \left\| \nabla N \right\|_{{{L}^{2}}}^{\frac{1}{2}}\left\| N \right\|_{{{L}^{2}}}^{\frac{1}{2}}\left\| \nabla \phi  \right\|_{{{L}^{2}}}^{1/2}\left\| {{\nabla }^{2}}\phi  \right\|_{{{L}^{2}}}^{1/2} \notag\\ 
 && +\sum\limits_{m=2}^{M}{\sum\limits_{q=2}^{Q}{\left\| \nabla N \right\|_{{{L}^{2}}}^{\frac{4q-1}{4}}\left\| N \right\|_{{{L}^{2}}}^{\frac{1}{4}}\left\| \nabla \phi  \right\|_{{{L}^{2}}}^{\frac{8m-9}{4}}\left\| \phi  \right\|_{{{L}^{2}}}^{\frac{1}{4}}\left\| \nabla \phi  \right\|_{{{L}^{2}}}^{1/2}\left\| {{\nabla }^{2}}\phi  \right\|_{{{L}^{2}}}^{1/2}}} \notag\\
  && +\left\| \nabla \phi  \right\|_{{{L}^{2}}}^{1/2}\left\| \phi  \right\|_{{{L}^{2}}}^{1/2}\left\| \nabla N \right\|_{{{L}^{2}}}^{1/2}\left\| {{\nabla }^{2}}N \right\|_{{{L}^{2}}}^{1/2} \notag\\ 
 && +\sum\limits_{m=2}^{M}{\sum\limits_{q=2}^{Q}{\left\| \nabla \phi  \right\|_{{{L}^{2}}}^{\frac{8m-5}{4}}\left\| \phi  \right\|_{{{L}^{2}}}^{\frac{1}{4}}\left\| \nabla N \right\|_{{{L}^{2}}}^{\frac{4q-5}{4}}\left\| N \right\|_{{{L}^{2}}}^{\frac{1}{4}}\left\| \nabla N \right\|_{{{L}^{2}}}^{1/2}\left\| {{\nabla }^{2}}N \right\|_{{{L}^{2}}}^{1/2}}} \notag\\ 
 & \le&  C{\left( 1+t \right)}~.  
\end{eqnarray}
Next, for $\Tilde{k}=3$, we differentiate \eqref{eom1} and \eqref{eom2} to obtain,
\begin{eqnarray}
       {{\left\| {{\nabla }^{3}}A \right\|}_{{{L}^{2}}}}&\le& {{\left\| \nabla \left( {{A}^{2}}\phi  \right) \right\|}_{{{L}^{2}}}}+{{\left\| {{\nabla }^{2}}A \right\|}_{{{L}^{2}}}} \notag\\ 
 & \le& {{\left\| A\nabla A\phi  \right\|}_{{{L}^{2}}}}+{{\left\| {{A}^{2}}\nabla \phi  \right\|}_{{{L}^{2}}}}+{{\left\| {{\nabla }^{2}}A \right\|}_{{{L}^{2}}}} \notag\\ 
 & \le& {{\left\| \nabla A \right\|}_{{{L}^{4}}}}{{\left\| \phi  \right\|}_{{{L}^{8}}}}{{\left\| A \right\|}_{{{L}^{8}}}}+{{\left\| {{A}^{2}} \right\|}_{{{L}^{4}}}}{{\left\| \nabla \phi  \right\|}_{{{L}^{4}}}}+{{\left\| {{\nabla }^{2}}A \right\|}_{{{L}^{2}}}} \notag\\ 
 & \le& \left\| {{\nabla }^{2}}A \right\|_{{{L}^{2}}}^{1/2}\left\| \nabla A \right\|_{{{L}^{2}}}^{1/2}\left\| \nabla \phi  \right\|_{{{L}^{2}}}^{3/4}\left\| \phi  \right\|_{{{L}^{2}}}^{1/4}\left\| \nabla A \right\|_{{{L}^{2}}}^{3/4}\left\| A \right\|_{{{L}^{2}}}^{1/4} \notag\\ 
 & &+\left\| \nabla A \right\|_{{{L}^{2}}}^{3/2}\left\| A \right\|_{{{L}^{2}}}^{1/2}\left\| {{\nabla }^{2}}\phi  \right\|_{{{L}^{2}}}^{1/2}\left\| \nabla \phi  \right\|_{{{L}^{2}}}^{1/2}+{{\left\| {{\nabla }^{2}}A \right\|}_{{{L}^{2}}}} \notag\\ 
 & \le& C\left( 1+t \right)~,
\end{eqnarray}
\begin{eqnarray}
  {{\left\| {{\nabla }^{3}}\phi  \right\|}_{{{L}^{2}}}}& \le&  {{\left\| \nabla {{\partial }_{\phi }}V \right\|}_{{{L}^{2}}}}+{{\left\| \nabla^2 A\phi  \right\|}_{{{L}^{2}}}}+{{\left\| \nabla A\nabla \phi  \right\|}_{{{L}^{2}}}} \notag\\ 
 &&  +{{\left\| A\nabla^2 \phi  \right\|}_{{{L}^{2}}}}+{{\left\| A\nabla A\phi  \right\|}_{{{L}^{2}}}}+{{\left\| {{A}^{2}}\nabla \phi  \right\|}_{{{L}^{2}}}} \notag\\ 
 & \le&  {{\left\| \nabla {{\partial }_{\phi }}V \right\|}_{{{L}^{2}}}}+{{\left\| \nabla^2 A \right\|}_{{{L}^{4}}}}{{\left\| \phi  \right\|}_{{{L}^{4}}}}+{{\left\| \nabla A \right\|}_{{{L}^{4}}}}{{\left\| \nabla \phi  \right\|}_{{{L}^{4}}}} \notag\\ 
 &&  +{{\left\| \nabla^2 \phi  \right\|}_{{{L}^{4}}}}{{\left\| A \right\|}_{{{L}^{4}}}}+{{\left\| \nabla A \right\|}_{{{L}^{2}}}}{{\left\| A \right\|}_{{{L}^{8}}}}{{\left\| \phi  \right\|}_{{{L}^{8}}}}+{{\left\| {{A}^{2}} \right\|}_{{{L}^{4}}}}{{\left\| \nabla \phi  \right\|}_{{{L}^{4}}}} \notag\\ 
 & \le&  {{\left\| \nabla {{\partial }_{\phi }}V \right\|}_{{{L}^{2}}}}+\left\| {{\nabla }^{3}}A \right\|_{{{L}^{2}}}^{1/2}\left\| \nabla^2 A \right\|_{{{L}^{2}}}^{1/2}\left\| \nabla \phi  \right\|_{{{L}^{2}}}^{1/2}\left\| \phi  \right\|_{{{L}^{2}}}^{1/2} \notag\\ 
 &&  +\left\| \nabla^2 A \right\|_{{{L}^{2}}}^{1/2}\left\| \nabla A \right\|_{{{L}^{2}}}^{1/2}\left\| \nabla^2 \phi  \right\|_{{{L}^{2}}}^{1/2}\left\| \nabla \phi  \right\|_{{{L}^{2}}}^{1/2} \notag\\ 
 &&  +\left\| {{\nabla }^{3}}\phi  \right\|_{{{L}^{2}}}^{1/2}\left\| \nabla^2 \phi  \right\|_{{{L}^{2}}}^{1/2}\left\| \nabla A \right\|_{{{L}^{2}}}^{1/2}\left\| A \right\|_{{{L}^{2}}}^{1/2} \notag\\ 
 && +{{\left\| \nabla A \right\|}_{{{L}^{2}}}}\left\| \nabla A \right\|_{{{L}^{2}}}^{3/4}\left\| A \right\|_{{{L}^{2}}}^{1/4}\left\| \nabla \phi  \right\|_{{{L}^{2}}}^{3/4}\left\| \phi  \right\|_{{{L}^{2}}}^{1/4} \notag\\ 
 &&  +\left\| \nabla A \right\|_{{{L}^{2}}}^{3/2}\left\| A \right\|_{{{L}^{2}}}^{1/2}\left\| \nabla^2\phi  \right\|_{{{L}^{2}}}^{1/2}\left\| \nabla \phi  \right\|_{{{L}^{2}}}^{1/2} \notag\\ 
 & \le&  C{{\left( 1+t \right)}^{2}} ~. 
\end{eqnarray}
Applying the results above, we arrive at
\begin{eqnarray}
   {{\left\| {{\nabla }^{2}}\left( {{\partial }_{\phi }}V \right) \right\|}_{{{L}^{2}}}}&\le& \left\| \nabla N \right\|_{{{L}^{2}}}^{\frac{1}{2}}\left\| N \right\|_{{{L}^{2}}}^{\frac{1}{2}}\left\| {{\nabla }^{3}}\phi  \right\|_{{{L}^{2}}}^{1/2}\left\| {{\nabla }^{2}}\phi  \right\|_{{{L}^{2}}}^{1/2}+\left\| {{\nabla }^{2}}N \right\|_{{{L}^{2}}}^{\frac{1}{2}}\left\| \nabla N \right\|_{{{L}^{2}}}^{\frac{1}{2}}\notag\\ 
 && +\left\| \nabla \phi  \right\|_{{{L}^{2}}}^{1/2}\left\| {{\nabla }^{2}}\phi  \right\|_{{{L}^{2}}}^{1/2}+\left\| {{\nabla }^{3}}N \right\|_{{{L}^{2}}}^{\frac{1}{2}}\left\| {{\nabla }^{2}}N \right\|_{{{L}^{2}}}^{\frac{1}{2}}\left\| \nabla \phi  \right\|_{{{L}^{2}}}^{1/2}\left\| \phi  \right\|_{{{L}^{2}}}^{1/2} \notag\\ 
 & &+\left\| \nabla \phi  \right\|_{{{L}^{2}}}^{\frac{5}{6}}\left\| \phi  \right\|_{{{L}^{2}}}^{\frac{1}{6}}\left\| {{\nabla }^{2}}N \right\|_{{{L}^{2}}}^{3/4}\left\| \nabla N \right\|_{{{L}^{2}}}^{1/4}+\sum\limits_{m=2}^{M}{\sum\limits_{q=2}^{Q}{\left( \left\| \nabla \phi  \right\|_{{{L}^{2}}}^{\frac{8m-9}{4}}\left\| \phi  \right\|_{{{L}^{2}}}^{\frac{1}{4}} \right.}} \notag\\ 
 && \cdot \left. \left\| \nabla N \right\|_{{{L}^{2}}}^{\frac{4q-5}{4}}\left\| N \right\|_{{{L}^{2}}}^{\frac{1}{4}}\left\| {{\nabla }^{2}}\phi  \right\|_{{{L}^{2}}}^{3/4}\left\| \nabla \phi  \right\|_{{{L}^{2}}}^{1/4}\left\| {{\nabla }^{2}}N \right\|_{{{L}^{2}}}^{3/4}\left\| \nabla N \right\|_{{{L}^{2}}}^{1/4} \right) \notag\\ 
 && +\sum\limits_{m=2}^{M}{\sum\limits_{q=2}^{Q}{\left\| \nabla \phi  \right\|_{{{L}^{2}}}^{\frac{8m-13}{4}}\left\| \phi  \right\|_{{{L}^{2}}}^{\frac{1}{4}}\left\| \nabla N \right\|_{{{L}^{2}}}^{\frac{4q-1}{4}}\left\| N \right\|_{{{L}^{2}}}^{\frac{1}{4}}\left\| {{\nabla }^{2}}\phi  \right\|_{{{L}^{2}}}^{3/2}\left\| \nabla \phi  \right\|_{{{L}^{2}}}^{1/2}}} \notag\\ 
 && +\sum\limits_{m=2}^{M}{\sum\limits_{q=2}^{Q}{\left\| \nabla \phi  \right\|_{{{L}^{2}}}^{\frac{8m-9}{4}}\left\| \phi  \right\|_{{{L}^{2}}}^{\frac{1}{4}}\left\| \nabla N \right\|_{{{L}^{2}}}^{\frac{4q-1}{4}}\left\| N \right\|_{{{L}^{2}}}^{\frac{1}{4}}\left\| {{\nabla }^{2}}\phi  \right\|_{{{L}^{2}}}^{1/2}\left\| {{\nabla }^{3}}\phi  \right\|_{{{L}^{2}}}^{1/2}}} \notag\\ 
 && +\sum\limits_{m=3}^{M}{\sum\limits_{q=3}^{Q}{\left\| \nabla \phi  \right\|_{{{L}^{2}}}^{\frac{8m-5}{4}}\left\| \phi  \right\|_{{{L}^{2}}}^{\frac{1}{4}}\left\| \nabla N \right\|_{{{L}^{2}}}^{\frac{4q-9}{4}}\left\| N \right\|_{{{L}^{2}}}^{\frac{1}{4}}\left\| {{\nabla }^{2}}N \right\|_{{{L}^{2}}}^{3/4}\left\| \nabla N \right\|_{{{L}^{2}}}^{1/4}}} \notag\\ 
 && +\sum\limits_{m=3}^{M}{\sum\limits_{q=3}^{Q}{\left\| \nabla \phi  \right\|_{{{L}^{2}}}^{\frac{8m-5}{4}}\left\| \phi  \right\|_{{{L}^{2}}}^{\frac{1}{4}}\left\| \nabla N \right\|_{{{L}^{2}}}^{\frac{4q-5}{4}}\left\| N \right\|_{{{L}^{2}}}^{\frac{1}{4}}\left\| {{\nabla }^{3}}N \right\|_{{{L}^{2}}}^{1/2}\left\| {{\nabla }^{2}}N \right\|_{{{L}^{2}}}^{1/2}}} \notag\\ 
 & \le& C{{\left( 1+t \right)}^{2}}~.  
\end{eqnarray}
This concludes the proof of the lemma.
\end{proof}
The same procedure also apply for $\Tilde{k}=4$, we can show
\begin{eqnarray}
   {{\left\| {{\nabla }^{4}}A \right\|}_{{{L}^{2}}}}&\le& {{\left\| \nabla A\nabla A\phi  \right\|}_{{{L}^{2}}}}+{{\left\| A\phi {{\nabla }^{2}}A \right\|}_{{{L}^{2}}}}+{{\left\| A\nabla A\nabla \phi  \right\|}_{{{L}^{2}}}}+{{\left\| {{A}^{2}}\nabla^2 \phi  \right\|}_{{{L}^{2}}}}+{{\left\| {{\nabla }^{3}}A \right\|}_{{{L}^{2}}}} \notag\\ 
 & \le& {{\left\| \nabla A \right\|}_{{{L}^{4}}}}{{\left\| \phi  \right\|}_{{{L}^{8}}}}{{\left\| \nabla A \right\|}_{{{L}^{8}}}}+{{\left\| {{\nabla }^{2}}A \right\|}_{{{L}^{4}}}}{{\left\| \phi  \right\|}_{{{L}^{8}}}}{{\left\| A \right\|}_{{{L}^{8}}}} \notag\\ 
 && +{{\left\| \nabla A \right\|}_{{{L}^{4}}}}{{\left\| \nabla \phi  \right\|}_{{{L}^{8}}}}{{\left\| A \right\|}_{{{L}^{8}}}}+{{\left\| \nabla^2 \phi  \right\|}_{{{L}^{4}}}}{{\left\| {{A}^{2}} \right\|}_{{{L}^{4}}}}+{{\left\| {{\nabla }^{3}}A \right\|}_{{{L}^{2}}}} \notag\\ 
 & \le& \left\| {{\nabla }^{2}}A \right\|_{{{L}^{2}}}^{1/2}\left\| \nabla A \right\|_{{{L}^{2}}}^{1/2}\left\| \nabla \phi  \right\|_{{{L}^{2}}}^{3/4}\left\| \phi  \right\|_{{{L}^{2}}}^{1/4}\left\| {{\nabla }^{2}}A \right\|_{{{L}^{2}}}^{3/4}\left\| \nabla A \right\|_{{{L}^{2}}}^{1/4} \notag\\ 
 && +\left\| {{\nabla }^{3}}A \right\|_{{{L}^{2}}}^{1/2}\left\| {{\nabla }^{2}}A \right\|_{{{L}^{2}}}^{1/2}\left\| \nabla \phi  \right\|_{{{L}^{2}}}^{3/4}\left\| \phi  \right\|_{{{L}^{2}}}^{1/4}\left\| \nabla A \right\|_{{{L}^{2}}}^{3/4}\left\| A \right\|_{{{L}^{2}}}^{1/4} \notag\\ 
 && +\left\| {{\nabla }^{2}}A \right\|_{{{L}^{2}}}^{1/2}\left\| \nabla A \right\|_{{{L}^{2}}}^{1/2}\left\| {{\nabla }^{2}}\phi  \right\|_{{{L}^{2}}}^{3/4}\left\| \nabla \phi  \right\|_{{{L}^{2}}}^{1/4}\left\| \nabla A \right\|_{{{L}^{2}}}^{3/4}\left\| A \right\|_{{{L}^{2}}}^{1/4} \notag\\ 
 && +\left\| {{\nabla }^{3}}\phi  \right\|_{{{L}^{2}}}^{1/2}\left\| {{\nabla }^{2}}\phi  \right\|_{{{L}^{2}}}^{1/2}\left\| \nabla A \right\|_{{{L}^{2}}}^{3/2}\left\| A \right\|_{{{L}^{2}}}^{1/2}+{{\left\| {{\nabla }^{3}}A \right\|}_{{{L}^{2}}}} \notag\\ 
 & \le& C{{\left( 1+t \right)}^{2}}~.  
\end{eqnarray}
Using the results in lemma \ref{lemmaV}, one can also derive
\begin{eqnarray}
   {{\left\| {{\nabla }^{4}}\phi  \right\|}_{{{L}^{2}}}}&\le& {{\left\| {{\nabla }^{2}}{{\partial }_{\phi }}V \right\|}_{{{L}^{2}}}}+{{\left\| {{\nabla }^{3}}A\phi  \right\|}_{{{L}^{2}}}}+{{\left\| {{\nabla }^{2}}A\nabla \phi  \right\|}_{{{L}^{2}}}}+{{\left\| \nabla A{{\nabla }^{2}}\phi  \right\|}_{{{L}^{2}}}}+{{\left\| A{{\nabla }^{3}}\phi  \right\|}_{{{L}^{2}}}} \notag\\ 
 && +{{\left\| \nabla A\nabla A\phi  \right\|}_{{{L}^{2}}}}+{{\left\| A{{\nabla }^{2}}A\phi  \right\|}_{{{L}^{2}}}}+{{\left\| A\nabla A\nabla \phi  \right\|}_{{{L}^{2}}}}+{{\left\| {{A}^{2}}{{\nabla }^{2}}\phi  \right\|}_{{{L}^{2}}}} \notag\\ 
 & \le& {{\left\| {{\nabla }^{2}}{{\partial }_{\phi }}V \right\|}_{{{L}^{2}}}}+{{\left\| {{\nabla }^{3}}A \right\|}_{{{L}^{4}}}}{{\left\| \phi  \right\|}_{{{L}^{4}}}}+{{\left\| {{\nabla }^{2}}A \right\|}_{{{L}^{4}}}}{{\left\| \nabla \phi  \right\|}_{{{L}^{4}}}}+{{\left\| \nabla A \right\|}_{{{L}^{4}}}}{{\left\| {{\nabla }^{2}}\phi  \right\|}_{{{L}^{4}}}} \notag\\ 
 && +{{\left\| A \right\|}_{{{L}^{4}}}}{{\left\| {{\nabla }^{3}}\phi  \right\|}_{{{L}^{4}}}}+{{\left\| \nabla A \right\|}_{{{L}^{4}}}}{{\left\| \nabla A \right\|}_{{{L}^{8}}}}{{\left\| \phi  \right\|}_{{{L}^{8}}}}+{{\left\| {{\nabla }^{2}}A \right\|}_{{{L}^{4}}}}{{\left\| A \right\|}_{{{L}^{8}}}}{{\left\| \phi  \right\|}_{{{L}^{8}}}} \notag\\ 
 && +{{\left\| \nabla A \right\|}_{{{L}^{4}}}}{{\left\| A \right\|}_{{{L}^{8}}}}{{\left\| \nabla \phi  \right\|}_{{{L}^{8}}}}+{{\left\| {{A}^{2}} \right\|}_{{{L}^{4}}}}{{\left\| {{\nabla }^{2}}\phi  \right\|}_{{{L}^{4}}}} \notag\\ 
 & \le& {{\left\| {{\nabla }^{2}}{{\partial }_{\phi }}V \right\|}_{{{L}^{2}}}}+\left\| {{\nabla }^{4}}A \right\|_{{{L}^{2}}}^{1/2}\left\| {{\nabla }^{3}}A \right\|_{{{L}^{2}}}^{1/2}\left\| \nabla \phi  \right\|_{{{L}^{2}}}^{1/2}\left\| \phi  \right\|_{{{L}^{2}}}^{1/2} \notag\\ 
 && +\left\| {{\nabla }^{3}}A \right\|_{{{L}^{2}}}^{1/2}\left\| {{\nabla }^{2}}A \right\|_{{{L}^{2}}}^{1/2}\left\| {{\nabla }^{2}}\phi  \right\|_{{{L}^{2}}}^{1/2}\left\| \nabla \phi  \right\|_{{{L}^{2}}}^{1/2} \notag\\ 
 && +\left\| {{\nabla }^{2}}A \right\|_{{{L}^{2}}}^{1/2}\left\| \nabla A \right\|_{{{L}^{2}}}^{1/2}\left\| {{\nabla }^{3}}\phi  \right\|_{{{L}^{2}}}^{1/2}\left\| {{\nabla }^{2}}\phi  \right\|_{{{L}^{2}}}^{1/2} \notag\\ 
 && +\left\| \nabla A \right\|_{{{L}^{2}}}^{1/2}\left\| A \right\|_{{{L}^{2}}}^{1/2}\left\| {{\nabla }^{4}}\phi  \right\|_{{{L}^{2}}}^{1/2}\left\| {{\nabla }^{3}}\phi  \right\|_{{{L}^{2}}}^{1/2} \notag\\ 
 && +\left\| {{\nabla }^{2}}A \right\|_{{{L}^{2}}}^{1/2}\left\| \nabla A \right\|_{{{L}^{2}}}^{1/2}\left\| {{\nabla }^{2}}A \right\|_{{{L}^{2}}}^{3/4}\left\| \nabla A \right\|_{{{L}^{2}}}^{1/4}\left\| \nabla \phi  \right\|_{{{L}^{2}}}^{3/4}\left\| \phi  \right\|_{{{L}^{2}}}^{1/4} \notag\\ 
 && +\left\| {{\nabla }^{3}}A \right\|_{{{L}^{2}}}^{1/2}\left\| {{\nabla }^{2}}A \right\|_{{{L}^{2}}}^{1/2}\left\| \nabla \phi  \right\|_{{{L}^{2}}}^{3/4}\left\| \phi  \right\|_{{{L}^{2}}}^{1/4}\left\| \nabla A \right\|_{{{L}^{2}}}^{3/4}\left\| A \right\|_{{{L}^{2}}}^{1/4} \notag\\ 
 && +\left\| {{\nabla }^{2}}A \right\|_{{{L}^{2}}}^{1/2}\left\| \nabla A \right\|_{{{L}^{2}}}^{1/2}\left\| \nabla A \right\|_{{{L}^{2}}}^{3/4}\left\| A \right\|_{{{L}^{2}}}^{1/4}\left\| {{\nabla }^{2}}\phi  \right\|_{{{L}^{2}}}^{3/4}\left\| \nabla \phi  \right\|_{{{L}^{2}}}^{1/4} \notag\\ 
 && +\left\| \nabla A \right\|_{{{L}^{2}}}^{3/2}\left\| A \right\|_{{{L}^{2}}}^{1/2}\left\| {{\nabla }^{3}}\phi  \right\|_{{{L}^{2}}}^{1/2}\left\| {{\nabla }^{2}}\phi  \right\|_{{{L}^{2}}}^{1/2} \notag\\
 &\le& C(1+t)^2~.
\end{eqnarray}
Hence, the proposition is proved.
\end{proof}
Next, we consider the initial conditions at $t=0$
\begin{eqnarray}\label{initialdata}
    A\left( 0,x \right)&=&{{a}_{\left( 0 \right)}}\left( x \right),\hspace{2em}{{\partial }_{t}}A\left( 0,x \right)={{a}_{\left( 1 \right)}}\left( x \right)~,\notag\\
    \phi \left( 0,x \right)&=&{{\varphi }_{\left( 0 \right)}}\left( x \right),\hspace{2em}{{\partial }_{t}}\phi \left( 0,x \right)={{\varphi }_{\left( 1 \right)}}\left( x \right)~,\notag\\
    N \left( 0,x \right)&=&{{n }_{\left( 0 \right)}}\left( x \right),\hspace{2em}{{\partial }_{t}}N \left( 0,x \right)={{n }_{\left( 1 \right)}}\left( x \right)~,\notag\\
    \text{div}{{a}_{\left( 0 \right)}}&=&0,\hspace{5.8em}\text{div}{{a}_{\left( 1 \right)}}=0.
\end{eqnarray}
In this case, no initial conditions are required for $A_0 $. The dynamic variables are simply $A, \phi$ and $N$. The norm that governs the system's evolution in equations \eqref{eomCG1}-\eqref{eomCG3} depends solely on these variables. The main results of our work are presented as follows.
\begin{namedtheorem}[Main]
For general initial data $a_{(0)}, a_{(1)}, \varphi_{(0)}, \varphi_{(1)}, n_{(0)}, n_{(1)}$ in \eqref{initialdata}, let
\begin{eqnarray}
      {{\mathcal{J}}_{0}}&=&{{\left\| \nabla {{a}_{\left( 0 \right)}} \right\|}_{{{L}^{2}}\left( {{\mathbb{R}}^{2}} \right)}}+{{\left\| {{a}_{\left( 1 \right)}} \right\|}_{{{L}^{2}}\left( {{\mathbb{R}}^{2}} \right)}}+{{\left\| {{\varphi }_{\left( 0 \right)}} \right\|}_{{{L}^{2}}\left( {{\mathbb{R}}^{2}} \right)}}+{{\left\| \nabla {{\varphi }_{\left( 0 \right)}} \right\|}_{{{L}^{2}}\left( {{\mathbb{R}}^{2}} \right)}} \notag\\ 
 && +{{\left\| {{\varphi }_{\left( 1 \right)}} \right\|}_{{{L}^{2}}\left( {{\mathbb{R}}^{2}} \right)}}+{{\left\| \nabla {{n}_{\left( 0 \right)}} \right\|}_{{{L}^{2}}\left( {{\mathbb{R}}^{2}} \right)}}+{{\left\| {{n}_{\left( 1 \right)}} \right\|}_{{{L}^{2}}\left( {{\mathbb{R}}^{2}} \right)}} 
\end{eqnarray}
is finite. Under this condition, there exists a unique global generalized solution to the MCSH equations \eqref{eom3}, \eqref{eomCG1},\eqref{eomCG3}, and  \eqref{eomCG2P} verifying the energy inequality
\begin{eqnarray}
\mathcal{E}\left(A_0,A,\phi\right)\le\mathcal{J}_0
\end{eqnarray}
as well as
\begin{eqnarray}\label{i}
\mathcal{J}(t)\le \Tilde{C}\left(1+t\right)^2
\end{eqnarray}
where $\Tilde{C}$ dependent only on $ \mathcal{J}_0 $ and on any finite interval $[0,T]$
\begin{eqnarray}\label{ii}
    \int\limits_{0}^{T}{\left( {{\left\| \square A \right\|}_{{{L}^{2}}}}+{{\left\| \square \phi  \right\|}_{{{L}^{2}}}}+{{\left\| \square N \right\|}_{{{L}^{2}}}} \right)}\text{ }dt<\infty ~.
\end{eqnarray}
Furthermore, if the initial data satisfy the higher regularity assumptions $\nabla {{a}_{\left( 0 \right)}}\in {{H}^{s}}\left( {{\mathbb{R}}^{2}} \right)$, ${{a}_{\left( 1 \right)}}\in {{H}^{s}}\left( {{\mathbb{R}}^{2}} \right),{{\varphi }_{\left( 0 \right)}}\in {{H}^{s+1}}\left( {{\mathbb{R}}^{2}} \right),{{\varphi }_{\left( 1 \right)}}\in {{H}^{s}}\left( {{\mathbb{R}}^{2}} \right)$ ,$\nabla {{n}_{\left( 0 \right)}}\in {{H}^{s}}\left( {{\mathbb{R}}^{2}} \right)$ and ${{n}_{\left( 1 \right)}}\in {{H}^{s}}\left( {{\mathbb{R}}^{2}} \right)$ for some integer $s>0$ then also for any time $t>0$, ${{A}_{0}},A\in {{H}^{s+1}}\left( {{\mathbb{R}}^{2}} \right),{{\partial }_{t}}{{A}_{0}},{{\partial }_{t}}A\in {{H}^{s}}\left( {{\mathbb{R}}^{2}} \right),\phi,N \in {{H}^{s+1}}\left( {{\mathbb{R}}^{2}} \right),{{\partial }_{t}}\phi,\partial_t N \in {{H}^{s}}\left( {{\mathbb{R}}^{2}} \right)$.
\end{namedtheorem}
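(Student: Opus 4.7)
The plan is to combine a short-time well-posedness argument for the Coulomb-gauge system \eqref{eomCG1}--\eqref{eomCG3}, \eqref{eomCG2P} with the polynomial a~priori bound of Proposition~\ref{prepos1}, and then to promote the resulting global solution to higher regularity by induction on $s$. Throughout, one works first with smooth, rapidly decaying initial data (so that Proposition~\ref{prepos1} applies directly) and recovers the finite-energy statement at the end by density together with uniform-in-regularization bounds.

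For local existence I would rewrite the equations in Duhamel form, recovering $A_0$ elliptically from \eqref{eomCG1} and treating $A,\phi,N$ as solutions of inhomogeneous wave equations sourced by quadratic and cubic nonlinearities, and then run a standard Picard iteration in $C([0,T];H^1)\cap C^1([0,T];L^2)$. The Chern--Simons term $\kappa F^\nu$ is linear and first order in $A$ and is therefore easily absorbed into the contraction; the cubic nonlinearities coming from $A_\mu D^\mu\phi$ and from the polynomial potential of Assumption~\ref{asumption} are controlled in two space dimensions by $H^1(\mathbb{R}^2)\hookrightarrow L^p(\mathbb{R}^2)$ for every $p<\infty$ together with H\"older. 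This yields a unique solution on a maximal interval $[0,T_{\max})$, and Cauchy uniqueness follows by running the same estimate on the difference of two solutions.

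Next I would upgrade local to global. The energy inequality $\mathcal{E}(t)\le\mathcal{J}_0$ follows from integrating $\partial_\mu T^{\mu 0}=0$ and observing that the topological Chern--Simons term drops out of $T^{\mu\nu}$. Combining this with Proposition~\ref{prepos1} yields $\mathcal{J}(t)\le\tilde C(1+t)^2$ on $[0,T_{\max})$, and since the blow-up alternative for the local theory is controlled by a norm dominated by $\mathcal{J}$, this polynomial bound forces $T_{\max}=\infty$. The integrability claim \eqref{ii} then follows by inspecting the right-hand sides of \eqref{eom3}, \eqref{eomCG3}, and \eqref{eomCG2P}: each summand is a product of factors whose $L^2$ norms are dominated by a power of $\mathcal{J}(t)$, and the resulting polynomial-in-$t$ bound is integrable on any $[0,T]$.

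For the persistence of $H^s$ regularity I would induct on $s\ge 1$ using the higher-order functional
\begin{eqnarray*}
\mathcal{J}^{(s)}(t) = \sum_{|\alpha|\le s}\bigl(\|\nabla^\alpha\partial A\|_{L^2}+\|\nabla^\alpha\partial\phi\|_{L^2}+\|\nabla^\alpha\partial N\|_{L^2}\bigr),
\end{eqnarray*}
where $\nabla^\alpha$ denotes a spatial multi-index derivative. Commuting $\nabla^\alpha$ through \eqref{eomCG1}--\eqref{eomCG3} and pairing against the appropriate time derivative, the leading terms reconstruct $\tfrac{d}{dt}\mathcal{J}^{(s)}$, while every commutator becomes a Moser-type product in which the top-order derivative sits on one factor and the remaining factors are bounded by Proposition~\ref{prepos1} or by Sobolev embedding. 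This yields an inequality $\tfrac{d}{dt}\mathcal{J}^{(s)}\lesssim P(t)\,\mathcal{J}^{(s)}$ with $P\in L^1_{\mathrm{loc}}([0,\infty))$, which closes the induction via Gr\"onwall. The main obstacle will be the Chern--Simons-induced coupling in \eqref{eomCG2P}: unlike the Maxwell--Klein--Gordon setting of \cite{Klainerman}, the term $\kappa F^i$ ties the spatial derivatives of $A$ to $\partial_t A^0$, which is determined only implicitly through the elliptic equation \eqref{eomCG1}. Propagating $s$ derivatives therefore requires controlling $\nabla^s\partial_t A^0$ through \eqref{eomCG1} at every induction step, and a careful bookkeeping of which norms are closed by $\mathcal{J}^{(s)}$ itself versus which must be imported from Proposition~\ref{prepos1} is where the technical work will concentrate.
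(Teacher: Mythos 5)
Your overall skeleton (a local theory, the polynomial a priori bound of Proposition \ref{prepos1}, a continuation argument, induction on $s$ for higher regularity, and density for rough data) matches the architecture of the paper's proof in Section \ref{sec:MCSH prove}. But there is a genuine gap at the heart of your argument: you claim the nonlinearities are ``controlled in two space dimensions by $H^1(\mathbb{R}^2)\hookrightarrow L^p$ for every $p<\infty$ together with H\"older,'' and you use this both to close the Picard iteration in $C([0,T];H^1)\cap C^1([0,T];L^2)$ and to assert that the blow-up alternative is governed by a norm dominated by $\mathcal{J}$. This fails for the \emph{derivative} nonlinearities. The source of the $A$-equation contains $\operatorname{Im}(\phi\,\overline{\nabla\phi})$, and the $\phi$-equation contains $A\cdot\nabla\phi$ and $A_0\,\partial_t\phi$: to place such a product in $L^2_x$ by H\"older you need one factor in $L^\infty$ (or the gradient factor in $L^{2+\varepsilon}$), and $H^1(\mathbb{R}^2)$ embeds in neither. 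So at the $H^1$ level the contraction does not close by H\"older and Sobolev alone, and a fixed-time norm ``dominated by $\mathcal{J}$'' does not control these products. The purely polynomial potential terms of Assumption \ref{asumption} are indeed subcritical as you say; the quadratic current term is not.

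This is precisely where the paper's method departs from yours and where the Coulomb gauge earns its keep. In Proposition \ref{prepoANphi} the controlled quantity is the \emph{spacetime} integral $X(T)=\int_0^T(\|\Box A\|_{L^2}+\|\Box\phi\|_{L^2}+\|\Box N\|_{L^2})\,dt$, and the dangerous term $\int_0^T\|\operatorname{Im}\mathcal{P}(\phi\,\overline{\nabla\phi})\|_{L^2}\,dt$ in \eqref{suku1} is estimated not by H\"older but by the bilinear (null-form) spacetime estimate of \cite{Klainerman} (the corollary to their Proposition 2.1), which exploits the divergence-free projection $\mathcal{P}$; the terms $A\nabla\phi$ and $A_0\partial_t\phi$ are likewise handled by $L^2([0,T]\times\mathbb{R}^2)$ spacetime bounds rather than fixed-time products. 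Without importing these bilinear estimates (or some substitute such as Strichartz inequalities), your local theory, your continuation criterion, and the commutator terms in your Gr\"onwall inequality for $\mathcal{J}^{(s)}$ all contain the same unclosed product $\phi\,\overline{\nabla\phi}$. Your diagnosis of the Chern--Simons coupling $\kappa F^i$ as the principal new obstacle is also misplaced: in the paper that term is comparatively harmless, being absorbed as $\int_0^T\|\mathcal{P}F_i\|_{L^2}\,dt\le X(T)$, and the real work is the same bilinear estimate that already appears in the Maxwell--Klein--Gordon case.
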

\section{The Energy Estimate}\label{sec:MCSH estimate}
\subsection{Estimate for \texorpdfstring{$A_0$}{}}
To prove the main theorem, it is essential to first obtain estimates for the fields. This subsection focuses on deriving estimates for the static component of the gauge potential $A_0$. 
\begin{preposition}\label{prepA0}
    Let $A_0$ be the solution of equation\eqref{eomCG1}, for $A$ and $\phi$ satisfies condition \eqref{J}, then at each time $t\in[0,T]$, we have
    \begin{eqnarray}
        {{\left\| \nabla {{A}_{0}} \right\|}_{{{L}^{2}}}}+{{\left\| {{A}_{0}}\phi  \right\|}_{{{L}^{2}}}}\le C(1+t)^2\mathcal{J}\left( t \right)~,
    \end{eqnarray}
    \begin{eqnarray}
        {{\left\| \nabla {{A}_{0}} \right\|}_{{{L}^{3}}}}\le C(1+t)\mathcal{J}(t)^{1/3}~,
    \end{eqnarray}
    \begin{eqnarray}
        {{\left\| {{A}_{0}} \right\|}_{{{L}^{\infty }}}}\le C{{\left( 1+t \right)}^{2}}\mathcal{J}{{\left( t \right)}^{2}}~.
    \end{eqnarray}
\end{preposition}
\begin{proof}
To prove preposition \ref{prepA0},  we can write equation \eqref{eomCG1} in the form
\begin{eqnarray}\label{deltaA0}
    \Delta {{A}^{0}}-{{A}^{0}}{{\left| \phi  \right|}^{2}}=\kappa {{F}^{0}}-\operatorname{Im}\left( \phi {{\partial }_{t}}\bar{\phi } \right)~.
\end{eqnarray}   
Multiplying  with $A_0$ and then integrating by parts, we obtain the following result
\begin{eqnarray}\label{A0esti}
 {{\int\limits_{{{\mathbb{R}}^{2}}}{\left| \nabla {{A}_{0}} \right|^{2}}}}d{{x}^{2}}+\int\limits_{{{\mathbb{R}}^{2}}}{{{\left| {{A}_{0}}\phi  \right|}^{2}}}d{{x}^{2}}\le c\left( \int\limits_{{{\mathbb{R}}^{2}}}{\left| {{F}_{0}}{{A}_{0}} \right|}d{{x}^{2}}+\int\limits_{{{\mathbb{R}}^{2}}}{\left| {{A}_{0}}\phi {{\partial }_{t}}\phi  \right|}d{{x}^{2}} \right) ~.
\end{eqnarray}
Applying Cauchy-Schwarz inequality on each term of \eqref{A0esti} along with Gagliardo-Nirenberg-Sobolev inequality, we attain
\begin{eqnarray}
 \int\limits_{{{\mathbb{R}}^{2}}}{{{\left| \nabla {{A}_{0}} \right|}^{2}}}d{{x}^{2}}+\int\limits_{{{\mathbb{R}}^{2}}}{{{\left| {{A}_{0}}\phi  \right|}^{2}}}d{{x}^{2}} & \le &  {{\left\| {{A}_{0}} \right\|}_{{{L}^{2}}}}{{\left\| \nabla A \right\|}_{{{L}^{2}}}}+{{\left\| {{A}_{0}}\phi  \right\|}_{{{L}^{2}}}}{{\left\| {{\partial }_{t}}\phi  \right\|}_{{{L}^{2}}}} \notag\\ 
 & \le &  {{\left\| {{A}_{0}} \right\|}_{{{L}^{2}}}}{{\left\| \nabla A \right\|}_{{{L}^{2}}}}+{{\left\| {{A}_{0}} \right\|}_{{{L}^{4}}}}{{\left\| \phi  \right\|}_{{{L}^{4}}}}{{\left\| {{\partial }_{t}}\phi  \right\|}_{{{L}^{2}}}} \notag\\ 
 & \le &  {{\left\| {{A}_{0}} \right\|}_{{{L}^{2}}}}{{\left\| \nabla A \right\|}_{{{L}^{2}}}}+\left\| \nabla {{A}_{0}} \right\|_{{{L}^{2}}}^{1/2}\left\| {{A}_{0}} \right\|_{{{L}^{2}}}^{1/2}\left\| \nabla \phi  \right\|_{{{L}^{2}}}^{1/2}\left\| \phi  \right\|_{{{L}^{2}}}^{1/2}{{\left\| {{\partial }_{t}}\phi  \right\|}_{{{L}^{2}}}}~. \notag\\ 
\end{eqnarray}
Using elliptic regularity theorem for Poisson equation, we have
\begin{eqnarray}
  {{\left\| {{\partial }_{t}}{{A}_{0}} \right\|}_{{{L}^{2}}}}& \le&  {{\left\| {{\partial }_{t}}{{F}_{ij}} \right\|}_{{{L}^{2}}}}+{{\left\| {{\partial }_{\phi }}V \right\|}_{{{L}^{2}}}} \notag\\ 
 & \le&  {{\left\| {{\partial }_{t}}{{F}_{ij}} \right\|}_{{{L}^{2}}}}+{{\left\| \sum\limits_{m=1}^{M}{\alpha {{\left| \phi  \right|}^{2m}}}\sum\limits_{q=1}^{Q}{\beta {{N}^{q}}} \right\|}_{{{L}^{2}}}} \notag\\ 
 & \le&  {{\left\| {{\partial }_{t}}{{F}_{ij}} \right\|}_{{{L}^{2}}}}+\sum\limits_{m=1}^{M}{\left\| \phi  \right\|_{{{L}^{8m}}}^{2m}}\sum\limits_{m=1}^{M}{\left\| N \right\|_{{{L}^{4q}}}^{q}} \notag\\ 
 & \le&  {{E}_{0}}+\sum\limits_{m=1}^{M}{\left\| \nabla \phi  \right\|_{{{L}^{2}}}^{\frac{4m-1}{2}}}\sum\limits_{q=1}^{Q}{\left\| \nabla N \right\|_{{{L}^{2}}}^{\frac{2q-1}{2}}}\left\| N \right\|_{{{L}^{2}}}^{\frac{1}{2}}\left\| \phi  \right\|_{{{L}^{2}}}^{1/2} \notag\\ 
 & \le&  C\left( 1+t \right)~.  
\end{eqnarray}
We can derive the estimate for ${{\left\| {{A}_{0}} \right\|}_{{{L}^{2}}}}$ as follows
\begin{eqnarray}
   \frac{d}{dt}\left\| {{A}_{0}} \right\|_{{{L}^{2}}}^{2}&=&\int\limits_{{{R}^{2}}}{2{{A}_{0}}{{\partial }_{t}}{{A}_{0}}}dx \notag\\ 
 & \le& C{{\left\| {{A}_{0}} \right\|}_{{{L}^{2}}}}{{\left\| {{\partial }_{t}}{{A}_{0}} \right\|}_{{{L}^{2}}}} \notag\\ 
  \frac{d}{dt}{{\left\| {{A}_{0}} \right\|}_{{{L}^{2}}}}&\le& C{{\left\| {{\partial }_{t}}{{A}_{0}} \right\|}_{{{L}^{2}}}} \notag\\ 
 & \le& C\left( 1+t \right) \notag\\ 
 {{\left\| {{A}_{0}} \right\|}_{{{L}^{2}}}}&\le& C{{\left( 1+t \right)}^{2}} ~. 
\end{eqnarray}
Therefore, we arrive at
\begin{eqnarray}
    {{\left\| \nabla {{A}_{0}} \right\|}_{{{L}^{2}}}}+{{\left\| {{A}_{0}}\phi  \right\|}_{{{L}^{2}}}}\le {{\left( 1+t \right)}^{2}}\left( {{\left\| \partial A \right\|}_{{{L}^{2}}}}+{{\left\| {{\partial }_{t}}\phi  \right\|}_{{{L}^{2}}}} \right)\le C{{\left( 1+t \right)}^{2}} \mathcal{J}(t)~.
\end{eqnarray}
Next, applying Sobolev-type inequality to Poisson equation to \eqref{eomCG1}, we derive
\begin{eqnarray}
   {{\left\| \nabla {{A}_{0}} \right\|}_{{{L}^{3}}}}&\le& C\left( {{\left\| {{F}_{ij}} \right\|}_{{{L}^{6/5}}}}+{{\left\| \phi \overline{{{D}_{0}}\phi } \right\|}_{{{L}^{6/5}}}} \right) \notag\\ 
 & \le &C\left( {{\left\| \nabla A \right\|}_{{{L}^{6/5}}}}+{{\left\| \phi  \right\|}_{{{L}^{3}}}}{{\left\| \overline{{{D}_{0}}\phi } \right\|}_{{{L}^{2}}}} \right) \notag\\ 
 & \le &C\left( \left\| \nabla A \right\|_{{{L}^{2}}}^{1/3}\left\| A \right\|_{{{L}^{2}}}^{2/3}+\left\| \nabla \phi  \right\|_{{{L}^{2}}}^{1/3}\left\| \phi  \right\|_{{{L}^{2}}}^{2/3}{{\left\| \overline{{{D}_{0}}\phi } \right\|}_{{{L}^{2}}}} \right) \notag\\ 
 & \le &C\left( 1+t \right)\left( \left\| \nabla A \right\|_{{{L}^{2}}}^{1/3}+\left\| \nabla \phi  \right\|_{{{L}^{2}}}^{1/3} \right)~.  
\end{eqnarray}
The estimate for ${{\left\| {{A}_{0}} \right\|}_{{{L}^{\infty }}}}$ is obtained in a similar procedure
\begin{eqnarray}\label{A0infnty}
  {{\left\| {{A}_{0}} \right\|}_{{{L}^{\infty }}}}&\le& C\left( {{\left\| \partial A \right\|}_{{{L}^{2}}}}+{{\left\| \phi D\phi  \right\|}_{{{L}^{2}}}}+{{\left\| {{A}_{0}} \right\|}_{{{L}^{2}}}} \right) \notag\\ 
 & \le& C\left( {{\left\| \partial A \right\|}_{{{L}^{2}}}}+{{\left\| \phi {{\partial }_{t}}\phi  \right\|}_{{{L}^{2}}}}+{{\left\| {{A}_{0}}{{\phi }^{2}} \right\|}_{{{L}^{2}}}}+{{\left\| {{A}_{0}} \right\|}_{{{L}^{2}}}} \right) \notag\\ 
 & \le& C\left( {{\left\| \partial A \right\|}_{{{L}^{2}}}}+{{\left\| \phi  \right\|}_{{{L}^{\infty}}}}{{\left\| {{\partial }_{t}}\phi  \right\|}_{{{L}^{2}}}}+{{\left\| {{A}_{0}} \right\|}_{{{L}^{3}}}}{{\left\| {{\phi }^{2}} \right\|}_{{{L}^{6}}}}+{{\left\| {{A}_{0}} \right\|}_{{{L}^{2}}}} \right) \notag\\ 
 & \le& C\left( {{\left\| \partial A \right\|}_{{{L}^{2}}}}+{{\left\| {{\partial }_{t}}\phi  \right\|}_{{{L}^{2}}}}+\left\| \nabla {{A}_{0}} \right\|_{{{L}^{2}}}^{1/3}\left\| {{A}_{0}} \right\|_{{{L}^{2}}}^{2/3}\left\| \phi  \right\|_{{{L}^{4}}}^{2/3}+{{\left\| {{A}_{0}} \right\|}_{{{L}^{2}}}} \right) \notag\\ 
 & \le& C{{\left( 1+t \right)}^{2}}\mathcal{J}{{\left( t \right)}^{2}} ~.
\end{eqnarray}
\end{proof}
Next, we derive estimates for two distinct solutions of  \eqref{eomCG1}. Specifically, we consider two distinct solutions of $A_0, A, \phi$ and $A_0', A', \phi' $ that verify \eqref{eomCG1} and \eqref{eomCG3}.
\begin{preposition}\label{prepo1}
Consider two solutions $A_0, A, \phi$ and $A_0', A', \phi' $ to \eqref{eomCG1} on a slab $[0, T^*] \times \mathbb{R}^2$ corresponding to the same initial data $a_0 \in H^1(\mathbb{R}^2), a_1 \in L^2(\mathbb{R}^2), \phi_0 \in H^1(\mathbb{R}^2), \phi_1 \in L^2(\mathbb{R}^2)$. Thus, $\mathcal{J}_0$ is finite. Then $A_0 = A_0'$ in $[0, T^*]\times \mathbb{R}^2$.
\end{preposition}
\begin{proof}
 By subtracting the equations \eqref{eomCG1} governing $A_0$ and $A_0'$, we derive:
 \begin{eqnarray}\label{A0-A0}
     \Delta \left( {{A}_{0}}-{{{{A}'}}_{0}} \right)-\left( {{A}_{0}}{{\left| \phi  \right|}^{2}}-{{{{A}'}}_{0}}{{\left| {{\phi }'} \right|}^{2}} \right)=Im~\left( {\phi }'\text{ }\overline{{{\partial }_{t}}{\phi }'}-\phi \text{ }\overline{{{\partial }_{t}}\phi } \right)+\kappa \left( {{F}_{0}}-{{{{F}'}}_{0}} \right)~.
 \end{eqnarray}
 By multiplying \eqref{A0-A0} by $A_0-A_0'$ and integrating over $\mathbb{R}^2$, following standard integration by parts, we obtain  
\begin{eqnarray}
   \int_{{{\mathbb{R}}^{2}}}{\left( {{\left| \nabla \left( {{A}_{0}}-{{{{A}'}}_{0}} \right) \right|}^{2}}+{{\left| {{A}_{0}}\phi -{{{{A}'}}_{0}}{\phi }' \right|}^{2}} \right)}dx&\le& \int_{{{\mathbb{R}}^{2}}}{\left| \left( {{A}_{0}}\phi -{{{{A}'}}_{0}}{\phi }' \right)\left( {{A}_{0}}-{{{{A}'}}_{0}} \right)\left( \phi -{\phi }' \right) \right|}dx \notag\\ 
 && +\int_{{{\mathbb{R}}^{2}}}{\left| \left( {{A}_{0}}-{{{{A}'}}_{0}} \right){{{{A}'}}_{0}}{\phi }'\left( \phi -{\phi }' \right) \right|}dx \notag\\ 
 && +\int_{{{\mathbb{R}}^{2}}}{\left| \left( {{A}_{0}}-{{{{A}'}}_{0}} \right)\left( \phi -{\phi }' \right){{\partial }_{t}}\phi  \right|}dx \notag\\ 
 && +\int_{{{\mathbb{R}}^{2}}}{\left| \left( {{A}_{0}}-{{{{A}'}}_{0}} \right){\phi }'\left( {{\partial }_{t}}\phi -{{\partial }_{t}}{\phi }' \right){{\partial }_{t}}\phi  \right|}dx \notag\\ 
 && +\int_{{{\mathbb{R}}^{2}}}{\left| \left( {{A}_{0}}-{{{{A}'}}_{0}} \right)\left( {{F}_{0}}-{{{{F}'}}_{0}} \right) \right|}dx \notag\\
  & \le& C\left( {{\left\| {{A}_{0}}\phi -{{{{A}'}}_{0}}{\phi }' \right\|}_{{{L}^{2}}}}{{\left\| {{A}_{0}}-{{{{A}'}}_{0}} \right\|}_{{{L}^{3}}}}{{\left\| \phi -{\phi }' \right\|}_{{{L}^{6}}}} \right. \notag\\ 
 && +{{\left\| {{{{A}'}}_{0}} \right\|}_{{{L}^{6}}}}{{\left\| {{\phi }'} \right\|}_{{{L}^{2}}}}{{\left\| {{A}_{0}}-{{{{A}'}}_{0}} \right\|}_{{{L}^{6}}}}{{\left\| \phi -{\phi }' \right\|}_{{{L}^{6}}}} \notag\\ 
 && +{{\left\| {{A}_{0}}-{{{{A}'}}_{0}} \right\|}_{{{L}^{6}}}}{{\left\| \phi -{\phi }' \right\|}_{{{L}^{3}}}}{{\left\| {{\partial }_{t}}\phi  \right\|}_{{{L}^{2}}}} \notag\\ 
 && +{{\left\| \phi -{\phi }' \right\|}_{{{L}^{3}}}}{{\left\| {{A}_{0}}-{{{{A}'}}_{0}} \right\|}_{{{L}^{6}}}}{{\left\| {{\partial }_{t}}\phi -{{\partial }_{t}}{\phi }' \right\|}_{{{L}^{2}}}} \notag\\ 
 && \left. +{{\left\| {{A}_{0}}-{{{{A}'}}_{0}} \right\|}_{{{L}^{3}}}}{{\left\| {{F}_{0}}-{{{{F}'}}_{0}} \right\|}_{{{L}^{3/2}}}} \right) \notag\\
  & \le& C\left( {{\left\| \nabla \left( {{A}_{0}}-{{{{A}'}}_{0}} \right) \right\|}_{{{L}^{2}}}}+{{\left\| {{A}_{0}}\phi -{{{{A}'}}_{0}}{\phi }' \right\|}_{{{L}^{2}}}} \right) \notag\\ 
 && {{\left( 1+t \right)}^{2}}\left( \mathcal{J}+{\mathcal{J}}' \right)\mathcal{J}\left( A-{A}',\phi -{\phi }' \right)\left( t \right)~.  \notag\\
\end{eqnarray}
\normalsize
Similarly, applying Sobolev-type inequality to Poisson equation satisfied by $A_0-A_0'$ derived from \eqref{deltaA0}, we have
\begin{eqnarray}
  {{\left\| \nabla \left( {{A}_{0}}-{{{{A}'}}_{0}} \right) \right\|}_{{{L}^{3}}}} &\le & C\left( {{\left\| {{A}_{0}}\phi -{{{{A}'}}_{0}}{\phi }' \right\|}_{{{L}^{2}}}}{{\left\| \phi  \right\|}_{{{L}^{3}}}}+{{\left\| {{{{A}'}}_{0}}{\phi }' \right\|}_{{{L}^{2}}}}{{\left\| \phi -{\phi }' \right\|}_{{{L}^{3}}}} \right.+{{\left\| \phi -{\phi }' \right\|}_{{{L}^{3}}}}{{\left\| {{\partial }_{t}}\phi  \right\|}_{{{L}^{2}}}} \notag\\ 
 & & \left. +{{\left\| {{\phi }'} \right\|}_{{{L}^{3}}}}{{\left\| {{\partial }_{t}}\left( \phi -{\phi }' \right) \right\|}_{{{L}^{2}}}}+{{\left\| {{A}_{0}}-{{{{A}'}}_{0}} \right\|}_{{{L}^{2}}}}{{\left\| {{F}_{0}}-{{{{F}'}}_{0}} \right\|}_{{{L}^{3}}}} \right) \notag\\ 
 & \le & C\left( 1+t \right)\mathcal{J}{{\left( A-{A}',\phi -{\phi }' \right)}^{1/3}}\left( t \right) ~. 
\end{eqnarray}
Next, using the same procedure as derivation of \eqref{A0infnty}, ones can show that
\begin{eqnarray}
   {{\left\|  \left( {{A}_{0}}-{{{{A}'}}_{0}} \right) \right\|}_{{{L}^{\infty}}}} \le C\left( 1+t \right)^2\mathcal{J}{{\left( A-{A}',\phi -{\phi }' \right)}^{2}}\left( t \right)~.  
\end{eqnarray}
Since $\mathcal{J}{{\left( A-{A}',\phi -{\phi }' \right)}}\left( t \right)=\mathcal{J}{{\left( A-{A}',\phi -{\phi }' \right)}}\left( 0 \right)$, $A_0, A, \phi$ as well as  ${A_0}', A', \phi'$ share the same initial data, it follows that  $\mathcal{J}{{\left( A-{A}',\phi -{\phi }' \right)}}\left( 0 \right)=0$. Consequently, we conclude that ${{A}_{0}}={{{{A}'}}_{0}}$. This verifies the proposition \ref{prepo1}.
\end{proof}

\subsection{Estimate for \texorpdfstring{$A, N$ and $\phi$}{}}
\begin{preposition}\label{prepoANphi}
    Let $A_0, A, \phi, N$ be a solution of the system \eqref{eom3}, \eqref{eomCG3} and \eqref{eomCG2P} in $[0,T^*]\times\mathbb{R}^2$ having $\mathcal{J}_0$ which is finite and satisfies \eqref{i} and \eqref{ii} of the main theorem. Then,
    \begin{enumerate}[label=(\alph*)]
        \item Depending only on $\mathcal{J}_0$, there exists $T\in[0,T^*]$ such that the following inequality holds
        \begin{eqnarray}
            X\left( T \right)=\int_{0}^{T}{\left( {{\left\| \square A \right\|}_{{{L}^{2}}}}+{{\left\| \square \phi  \right\|}_{{{L}^{2}}}}+{{\left\| \square N  \right\|}_{{{L}^{2}}}} \right)dt\le 1}~.
        \end{eqnarray}
    \item There exists a real constant $C'$ depending only on $\mathcal{J}_0$ and $T^*$ such that $X(T^*) \le C'$.
    \end{enumerate}
\end{preposition}
\begin{proof}
Part (b) follows directly by repeatedly applying the result from part (a) a finite number of times and observing that, for all $0\le t\le T^*$, the condition \eqref{i} of the main theorem ensures that $\mathcal{J}(t)$ remains uniformly bounded. The first claim directly follows from the estimate, 
\begin{eqnarray}\label{X}
   X\left( T \right)\le CT{{\left( 1+{{\mathcal{J}}_{0}}+X\left( T \right) \right)}^{3}}~,
\end{eqnarray}
for all $0\le T\le T^*$ and $T\leq 1$. The function $ X $ is continuous with $ X\left( 0 \right)=0 $. Choose $ T_0 $ sufficiently small such that $ C T_0(1 + \mathcal{J}_0)^3 \leq 1 $. If $ 0 \leq T \leq T_0 $ satisfies \eqref{X}, then $ X(T) \leq C T(1 + \mathcal{J}_0)^3 \leq 1 $. To prove \eqref{X}, we begin by applying the standard energy estimate to $A, N$ and $\phi$. Specifically, for every $0\le T\le T^*$, the following holds
\begin{eqnarray}
    {{\left\| \partial A\left( T,\cdot  \right) \right\|}_{{{L}^{2}}}}\le {{\left\| \partial A\left( T,0 \right) \right\|}_{{{L}^{2}}}}+\int_{0}^{T}{{{\left\| \square A\left( t,\cdot  \right) \right\|}_{{{L}^{2}}}}dt}~,
\end{eqnarray}
\begin{eqnarray}
    {{\left\| \partial \phi \left( T,\cdot  \right) \right\|}_{{{L}^{2}}}}\le {{\left\| \partial \phi \left( T,0 \right) \right\|}_{{{L}^{2}}}}+\int_{0}^{T}{{{\left\| \square \phi \left( t,\cdot  \right) \right\|}_{{{L}^{2}}}}dt}~,
\end{eqnarray}
\begin{eqnarray}
    {{\left\| \partial N \left( T,\cdot  \right) \right\|}_{{{L}^{2}}}}\le {{\left\| \partial N \left( T,0 \right) \right\|}_{{{L}^{2}}}}+\int_{0}^{T}{{{\left\| \square N \left( t,\cdot  \right) \right\|}_{{{L}^{2}}}}dt}~.
\end{eqnarray}
Therefore,
\begin{eqnarray}
    \mathcal{J}\left( A,\phi  \right)\left( T \right)\le C\left\{ \mathcal{J}\left( A,\phi  \right)\left( 0 \right)+X\left( T \right) \right\}~.
\end{eqnarray}
In view of the wave equations \eqref{eomCG3}, \eqref{eomCG2P} and \eqref{eomCG3}, we have
\begin{eqnarray}\label{Xineq}
   X\left( T \right)&\le& \int_{0}^{T}{{{\left\| Im\mathcal{P}\left( \phi \overline{D\phi } \right) \right\|}_{{{L}^{2}}}}dt}+\int_{0}^{T}{{{\left\| \phi \left( {{\partial }_{t}}{{A}_{0}} \right) \right\|}_{{{L}^{2}}}}dt} \notag\\ 
 && +\int_{0}^{T}{{{\left\| {{A}_{0}}{{\partial }_{t}}\phi  \right\|}_{{{L}^{2}}}}dt}+\int_{0}^{T}{{{\left\| A\nabla \phi  \right\|}_{{{L}^{2}}}}dt} \notag\\ 
 && +\int_{0}^{T}{{{\left\| A_{0}^{2}\phi  \right\|}_{{{L}^{2}}}}dt}+\int_{0}^{T}{{{\left\| {{A}^{2}}\phi  \right\|}_{{{L}^{2}}}}dt}+\int_{0}^{T}{{{\left\| \mathcal{P}{{F}_{i}} \right\|}_{{{L}^{2}}}}dt} \notag\\ 
 && +\int_{0}^{T}{{{\left\| {{\partial }_{\phi }}V \right\|}_{{{L}^{2}}}}dt}+\int_{0}^{T}{{{\left\| {{\partial }_{N }}V \right\|}_{{{L}^{2}}}}dt}~.
\end{eqnarray}
Furthermore, the first term of \eqref{Xineq} can be bounded as follows
\begin{eqnarray}\label{suku1}
    \int_{0}^{T}{{{\left\| Im\mathcal{P}\left( \phi \text{ }\overline{D\phi } \right) \right\|}_{{{L}^{2}}}}dt}\le C\int_{0}^{T}{\left( {{\left\| Im\mathcal{P}\left( \phi \text{ }\overline{\nabla \phi } \right) \right\|}_{{{L}^{2}}}}+{{\left\|\mathcal{P}( A{{\phi }^{2}}) \right\|}_{{{L}^{2}}}} \right)dt}~.
\end{eqnarray}
If we now make use of the corollary to Preposition 2.1 in \cite{Klainerman}, we have
\begin{eqnarray}
   \int_{0}^{T}{\left( {{\left\| Im\mathcal{P}\left( \phi \text{ }\overline{\nabla \phi } \right) \right\|}_{{{L}^{2}}}} \right)dt}&\le& C{{T}^{1/2}}{{\left( {{\mathcal{J}}_{0}}+\int_{0}^{T}{{{\left\| \square \phi \left( t,\cdot  \right) \right\|}_{{{L}^{2}}}}dt} \right)}^{2}} \notag\\ 
 & \le& C{{T}^{1/2}}{{\left( {{\mathcal{J}}_{0}}+X\left( T \right) \right)}^{2}}~.  
\end{eqnarray}
The second term in \eqref{suku1} can similarly be estimated as follows
\begin{eqnarray}
   {{\left\| \mathcal{P}A{{\phi }^{2}} \right\|}_{{{L}^{2}}}}&\le& {{\left\| A \right\|}_{{{L}^{6}}}}\left\| \phi  \right\|_{{{L}^{6}}}^{2} \notag\\ 
 & \le& C\mathcal{J} \notag\\ 
 & \le& C{{\left( {{\mathcal{J}}_{0}}+X\left( T \right) \right)}^{2}}~.  
\end{eqnarray}
Thus, the estimate for the first term in \eqref{Xineq} is given by
\begin{eqnarray}
    \int_{0}^{T}{{{\left\| Im\mathcal{P}\left( \phi \text{ }\overline{D\phi } \right) \right\|}_{{{L}^{2}}}}dt}\le C{{T}^{1/2}}{{\left( 1+{{\mathcal{J}}_{0}}+X\left( T \right) \right)}^{2}}~.
\end{eqnarray}
For the second term, Using the H\"older inequality, one can obtain
\begin{eqnarray}
  \int_{0}^{T}{{{\left\| {{\partial }_{t}}{{A}_{0}}\phi  \right\|}_{{{L}^{2}}}}dt}&\le& \int_{0}^{T}{{{\left\| {{\partial }_{t}}{{A}_{0}} \right\|}_{{{L}^{4}}}}{{\left\| \phi  \right\|}_{{{L}^{4}}}}dt} \notag\\ 
 & \le& C\mathcal{J}\left( T \right)\int_{0}^{T}{{{\left\| {{\partial }_{t}}{{A}_{0}} \right\|}_{{{L}^{4}}}}dt}~.  
\end{eqnarray}
To find the estimate for $ {{\left\| {{\partial }_{t}}{{A}_{0}} \right\|}_{{{L}^{4}}}}$, we use the Sobolev-type inequality for Poisson equation as in the previous section and get the following inequality
\begin{eqnarray}\label{doteA0L4}
    {{\left\| {{\partial }_{t}}{{A}_{0}} \right\|}_{{{L}^{4}}}}\le C\left( {{\left\| {{\partial }_{\phi }}V \right\|}_{{{L}^{4/3}}}}+{{\left\| {{\partial }^{0}}{{F}_{0}} \right\|}_{{{L}^{4/3}}}}+{{\left\| \phi {{D}_{i}}\phi  \right\|}_{{{L}^{4}}}} \right)
\end{eqnarray}
For the first term of \eqref{doteA0L4}, using the potential form in assumption \ref{asumption}, one can derive 
\begin{eqnarray}
   {{\left\| {{\partial }_{\phi }}V \right\|}_{{{L}^{4/3}}}}&\le&c {{\left\| \sum\limits_{m=1}^{M}{\sum\limits_{q=1}^{Q}{{{\left| \phi  \right|}^{2m-1}}{{N}^{q}}}} \right\|}_{{{L}^{4/3}}}} 
   \notag\\ 
 & \le&c \sum\limits_{m=1}^{M}{\sum\limits_{q=1}^{Q}{{{\left\| {{\phi }^{2m-1}} \right\|}_{{{L}^{4}}}}{{\left\| {{N}^{q}} \right\|}_{{{L}^{2}}}}}} \notag\\ 
 & \le& c\sum\limits_{m=1}^{M}{\sum\limits_{q=1}^{Q}{\left\| \phi  \right\|_{{{L}^{4\left( 2m-1 \right)}}}^{2m-1}\left\| N \right\|_{{{L}^{2q}}}^{q}}} \notag\\ 
 & \le& c\sum\limits_{m=1}^{M}{\left\| \nabla \phi  \right\|_{{{L}^{2}}}^{\frac{4m-3}{2}}\left\| \phi  \right\|_{{{L}^{2}}}^{\frac{1}{2}}}\sum\limits_{q=1}^{Q}{\left\| \nabla N \right\|_{{{L}^{2}}}^{q-1}}\left\| N \right\|_{{{L}^{2}}}^{{}} \notag\\ 
 & \le&c {\mathcal{J}^{3/2}} ~. 
\end{eqnarray}
The second and the third term of \eqref{doteA0L4} can be estimated as
\begin{eqnarray}
   {{\left\| {{\partial }_{0}}{{F}^{0}} \right\|}_{{{L}^{4/3}}}}&\le& {{\left\| {{\nabla }^{2}}A \right\|}_{{{L}^{4/3}}}}+{{\left\| {{\nabla }^{3}}A \right\|}_{{{L}^{4/3}}}} \notag\\ 
 & \le& \left\| {{\nabla }^{3}}A \right\|_{{{L}^{2}}}^{1/2}\left\| \nabla A \right\|_{{{L}^{2}}}^{1/2}+\left\| {{\nabla }^{4}}A \right\|_{{{L}^{2}}}^{1/2}\left\| {{\nabla }^{2}}A \right\|_{{{L}^{2}}}^{1/2} \notag\\ 
 & \le& \mathcal{J} ~, 
\end{eqnarray}
\begin{eqnarray}
  {{\left\| \phi {{D}_{i}}\phi  \right\|}_{{{L}^{4}}}}& \le&  {{\left\| \phi \nabla \phi  \right\|}_{{{L}^{4}}}}+{{\left\| A{{\phi }^{2}} \right\|}_{{{L}^{4}}}} \notag\\ 
 & \le&  {{\left\| \phi  \right\|}_{{{L}^{8}}}}{{\left\| \nabla \phi  \right\|}_{{{L}^{8}}}}+{{\left\| A \right\|}_{{{L}^{8}}}}{{\left\| {{\phi }^{2}} \right\|}_{{{L}^{8}}}} \notag\\ 
 & \le&  \left\| \nabla \phi  \right\|_{{{L}^{2}}}^{3/4}\left\| \phi  \right\|_{{{L}^{2}}}^{1/4}\left\| {{\nabla }^{2}}\phi  \right\|_{{{L}^{2}}}^{3/4}\left\| \nabla \phi  \right\|_{{{L}^{2}}}^{1/4}+\left\| \nabla A \right\|_{{{L}^{2}}}^{3/4}\left\| A \right\|_{{{L}^{2}}}^{1/4}\left\| \nabla \phi  \right\|_{{{L}^{2}}}^{7/4}\left\| \phi  \right\|_{{{L}^{2}}}^{1/4} \notag\\ 
 & \le&  {\mathcal{J}^{2}}+{\mathcal{J}^{3}} \notag\\
 &\le& {{\left( 1+{{\mathcal{J}}_{0}}+X\left( T \right) \right)}^{3}}~.
\end{eqnarray}
Thus, we have
\begin{eqnarray}
    \int_{0}^{T}{{{\left\| {{\partial }_{t}}{{A}_{0}}\phi  \right\|}_{{{L}^{2}}}}dt}&\le&CT{{\left( 1+{{\mathcal{J}}_{0}}+X\left( T \right) \right)}^{3}}~.
\end{eqnarray}
Next, for the third and fourth term of \eqref{Xineq}, we derive
\begin{eqnarray}
  \int_{0}^{T}{{{\left\| {{A}_{0}}{{\partial }_{t}}\phi  \right\|}_{{{L}^{2}}}}dt}+\int_{0}^{T}{{{\left\| A\nabla \phi  \right\|}_{{{L}^{2}}}}dt}& \le&  C{{T}^{1/2}}\left({{\left\| A\nabla \phi  \right\|}_{{{L}^{2}}\left( \left[ 0,T \right]\times {{\mathbb{R}}^{2}} \right)}}+{{\left\| A_0 
  \partial_t \phi \right\|}_{{{L}^{2}}\left( \left[ 0,T \right]\times {{\mathbb{R}}^{2}} \right)}}\right) \notag\\ 
 & \le&  C{{T}^{1/2}}\left( {{\mathcal{J}}_{0}}+\int_{0}^{T}{\left( {{\left\| \square A \right\|}_{{{L}^{2}}}} \right)dt+\int_{0}^{T}{\left( {{\left\| \square \phi  \right\|}_{{{L}^{2}}}} \right)dt}} \right) \notag\\ 
 & \le&  C{{T}^{1/2}}\left( {{\mathcal{J}}_{0}}+X\left( T \right) \right)  ~.
\end{eqnarray}
One can also derive the estimate for fifth and sixth term of \eqref{Xineq} as follows
\begin{eqnarray}
   \int_{0}^{T}{{{\left\| {{A}^{2}}\phi  \right\|}_{{{L}^{2}}}}dt}+\int_{0}^{T}{{{\left\| {{A_0}^{2}}\phi  \right\|}_{{{L}^{2}}}}dt}&\le& \int_{0}^{T}{\left(\left\| A \right\|_{{{L}^{6}}}^{2}{{\left\| \phi  \right\|}_{{{L}^{6}}}}+\left\| A_0 \right\|_{{{L}^{6}}}^{2}{{\left\| \phi  \right\|}_{{{L}^{6}}}}\right)dt} \notag\\ 
 & \le& CT{{\mathcal{J}}^{2}} \notag\\ 
 & \le & CT{{\left( {{\mathcal{J}}_{0}}+X\left( T \right) \right)}^{2}}~.
\end{eqnarray}
Then, for the Chern-Simmons term, we attain
\begin{eqnarray}
    \int_{0}^{T}{{{\left\|\mathcal{P} {{F}_{i}} \right\|}_{{{L}^{2}}}}dt}\le \int_{0}^{T}{{{\left\| \square A \right\|}_{{{L}^{2}}}}dt}\le X\left( T \right)~.
\end{eqnarray}
Finally, for the potential term of \eqref{Xineq}, we show that
\begin{eqnarray}
   \int_{0}^{T}{{{\left\| {{\partial }_{\phi }}V \right\|}_{{{L}^{2}}}}dt+{{\left\| {{\partial }_{N }}V \right\|}_{{{L}^{2}}}}dt}&\le&\int_{0}^{T}{\left\| \square \phi  \right\|_{{{L}^{2}}}+\left\| \square N  \right\|_{{{L}^{2}}}dt} \notag\\ 
 & \le &X\left( T \right)  
\end{eqnarray}
This concludes the proof of the proposition \ref{prepoANphi}.
\end{proof}
Thus, we can state the following proposition to prove our local existence result:
\begin{preposition}\label{local}
Let $(A_{i},\phi, N)$ be a solution of the Maxwell-Chern-Simons-Higgs equations in a strip $[0,T^*]\times\mathbb{R}^{2}$ in $(2+1)$-dimensional Minkowski spacetime. Define
\begin{eqnarray}
   H(T)&\equiv&\underset{0\le t\le T}{\mathop{\sup }}\,\left( {{\left\| {{A}_{i}}\left( t,\cdot  \right) \right\|}_{{{L}^{2}}\left( {{\mathbb{R}}^{2}} \right)}}+{{\left\| \phi \left( t,\cdot  \right) \right\|}_{{{L}^{2}}\left( {{\mathbb{R}}^{2}} \right)}}+{{\left\| N\left( t,\cdot  \right) \right\|}_{{{L}^{2}}\left( {{\mathbb{R}}^{2}} \right)}} \right. \notag\\ 
 & &+\left. {{\left\| \partial \phi \left( t,\cdot  \right) \right\|}_{{{L}^{2}}\left( {{\mathbb{R}}^{2}} \right)}}+{{\left\| \partial N\left( t,\cdot  \right) \right\|}_{{{L}^{2}}\left( {{\mathbb{R}}^{2}} \right)}} \right)
\end{eqnarray}
Then, there exist constants $C > 0$ and $T^{*} > 0$, depending only on $H(0)$, such that if $T < T^{*}$ then $H(T^*) \le C H(0)$.
\end{preposition}
\begin{proof}
 We define,
\begin{eqnarray}
 X\left( T \right)\equiv\int_{0}^{T}{\left( {{\left\| \square A \right\|}_{{{L}^{2}}}}+{{\left\| \square \phi  \right\|}_{{{L}^{2}}}}+{{\left\| \square N  \right\|}_{{{L}^{2}}}} \right)dt}~.
\end{eqnarray}
The energy estimates show that
\begin{eqnarray}
    H(T^*)\le c \left(H(0)+X(T^*)\right)
\end{eqnarray}
Since by parts (b) preposition \ref{prepoANphi}, $X(T^*)\le C'$, it can be shown that $H(T^*) \le C H(0)$.
\end{proof}

Next, we consider two distinct solutions of $A_0, A, N, \phi$ and $A_0', A', N', \phi' $ that verify \eqref{eomCG2} and \eqref{eomCG3}. 
\begin{preposition}\label{prepoANphiuniqnes}
    Let $A_0, A,  \phi, N$ and  $A_0', A', N', \phi'$ be the two solution of the MCSH system \eqref{eomCG3}, \eqref{eomCG3} and \eqref{eomCG2P} in $[0,T^*]\times\mathbb{R}^2$ with $\mathcal{J}_0$ and  $\mathcal{J'}_0$ finite and satisfies \eqref{i} and \eqref{ii} of the main theorem. Then,
    \begin{enumerate}[label=(\alph*)]
        \item There exists $T\in [0,T^*]$ and a constant $C$ that depend only on $\mathcal{J}_0=\mathcal{J}\left(A,\phi,N\right)(0)$, $\mathcal{J'}_0=\mathcal{J'}\left(A',\phi',N'\right)(0)$  as well as $ X\left( T^* \right)=\int_{0}^{T^*}\left( {{\left\| \square A \right\|}_{{{L}^{2}}}}+{{\left\| \square \phi  \right\|}_{{{L}^{2}}}}+{{\left\| \square N  \right\|}_{{{L}^{2}}}} \right)dt$, $ X'\left( T^* \right)=\int_{0}^{T^*}\left( {{\left\| \square A' \right\|}_{{{L}^{2}}}}+{{\left\| \square \phi'  \right\|}_{{{L}^{2}}}}+{{\left\| \square N'  \right\|}_{{{L}^{2}}}} \right)dt$ such that the following inequality holds:
        \begin{eqnarray}
   \mathcal{M}(T)&=&\int_{0}^{T}{{{\left\| \Box (A-{A}')(t,\cdot ) \right\|}_{{{L}^{2}}}}}dt+\int_{0}^{T}{{{\left\| \Box (\phi -{\phi }')(t,\cdot ) \right\|}_{{{L}^{2}}}}}dt \notag\\ 
 & &+\int_{0}^{T}{{{\left\| \Box (N-{N}')(t,\cdot ) \right\|}_{{{L}^{2}}}}}dt \notag\\ 
 & \le &C\mathcal{J}_0(A - A', \phi - \phi', N-N')~.
        \end{eqnarray}
    \item There exists a real constant $C'$ that depend only on $T^*$, $\mathcal{J}_0$, $\mathcal{J'}_0$, $X\left( T^* \right)$, and $X'\left( T^* \right)$ such that 
    \begin{eqnarray}
       \mathcal{M}(T^*)&=&\int_{0}^{T^*}{{{\left\| \Box (A-{A}')(t,\cdot ) \right\|}_{{{L}^{2}}}}}dt+\int_{0}^{T^*}{{{\left\| \Box (\phi -{\phi }')(t,\cdot ) \right\|}_{{{L}^{2}}}}}dt \notag\\ 
 & &+\int_{0}^{T^*}{{{\left\| \Box (N-{N}')(t,\cdot ) \right\|}_{{{L}^{2}}}}}dt \notag\\ 
 & \le &C\mathcal{J}_0(A - A', \phi - \phi', N-N')~. 
    \end{eqnarray}
    \end{enumerate}
\end{preposition}
\begin{proof}
Part (b) is derived using the conclusion from part (a) in line with the energy estimate \eqref{Mineq}. This process involves iteratively applying the result a finite number of times while taking into account the uniform boundedness of $\mathcal{J}_0\left(t\right)$, $\mathcal{J'}_0\left(t\right)$ in $[0,T^*]\times\mathbb{R}^2$. 

To prove Part (a), we use the same procedure as in the previous preposition, we obtain
\begin{eqnarray}\label{Mineqsuku}
   \mathcal{M}\left( T \right)&\le& \int_{0}^{T}{{{\left\| Im\mathcal{P}\left( \phi \overline{D\phi } \right)-Im\mathcal{P}\left( {\phi }'{{\overline{D\phi }}^{\prime }} \right) \right\|}_{{{L}^{2}}}}dt}+\int_{0}^{T}{{{\left\| \phi \left( {{\partial }_{t}}{{A}_{0}} \right)-{\phi }'\left( {{\partial }_{t}}{{{{A}'}}_{0}} \right) \right\|}_{{{L}^{2}}}}dt} \notag\\ 
 && +\int_{0}^{T}{{{\left\| {{A}_{0}}{{\partial }_{t}}\phi -{{{{A}'}}_{0}}{{\partial }_{t}}{\phi }' \right\|}_{{{L}^{2}}}}dt}+\int_{0}^{T}{{{\left\| A\nabla \phi -{A}'\nabla {\phi }' \right\|}_{{{L}^{2}}}}dt} \notag\\ 
 &&+\int_{0}^{T}{{{\left\| A_{0}^{2}\phi -A{{_{0}^{'}}^{2}}{\phi }' \right\|}_{{{L}^{2}}}}dt}+\int_{0}^{T}{{{\left\| {{A}^{2}}\phi -A{{'}^{2}}{\phi }' \right\|}_{{{L}^{2}}}}dt}+\int_{0}^{T}{{{\left\| \mathcal{P}({{F}_{i}}-{{F}_{i}})^{\prime } \right\|}_{{{L}^{2}}}}dt}\notag\\
 && +\int_{0}^{T}{{{\left\| {{\partial }_{\phi }}V-{{\partial }_{\phi }}{V}' \right\|}_{{{L}^{2}}}}dt}+\int_{0}^{T}{{{\left\| {{\partial }_{N}}V-{{\partial }_{N}}{V}' \right\|}_{{{L}^{2}}}}dt}~.  
\end{eqnarray}
The estimate for each term of \eqref{Mineqsuku} follows directly as in the proof of preposition \ref{prepoANphi}. For the first term of \eqref{Mineqsuku}, we can show that
\begin{eqnarray}
    \int_{0}^{T}{{{\left\| Im\mathcal{P}\left( \phi \text{ }\overline{D\phi } \right)-Im\mathcal{P}\left( {\phi }'\text{ }{{\overline{D\phi }}^{\prime }} \right) \right\|}_{{{L}^{2}}}}dt}\le C{{T}^{1/2}}{{\left( 1+{{\mathcal{J}}_{0}}\left( \phi -{\phi }' \right)+\mathcal{M}\left( T \right) \right)}^{2}}~.
\end{eqnarray}
For the second term of \eqref{Mineqsuku}, we have
\begin{eqnarray}
    \int_{0}^{T}{{{\left\| {{\partial }_{t}}{{A}_{0}}\phi -{{\partial }_{t}}{{{{A}'}}_{0}}{\phi }' \right\|}_{{{L}^{2}}}}dt}\le CT{{\left( 1+{{\mathcal{J}}_{0}}\left( A-{A}',\phi -{\phi }',N-{N}' \right)+\mathcal{M}\left( T \right) \right)}^{3}}
\end{eqnarray}
By applying the same method used in the proof of Preposition \ref{prepoANphi}, after some computation, the remaining terms can be estimated to get 
\begin{eqnarray}\label{Mineq}
\mathcal{M}(T) \leq C T \left(\mathcal{J}_0(A - A', \phi - \phi', N-N') + \mathcal{M}(T)\right).
\end{eqnarray}
with a constant $C$ depending only on $\mathcal{J}_0, \mathcal{X}(T^*), \mathcal{X}'(T^*)$ for all $0 < T \leq T^*$ and $ T \leq 1$. The proof of part (a) follows directly from \eqref{Mineq}. This completes the prof of preposition \ref{prepoANphiuniqnes}.
\end{proof}
To conclude this section, let us consider a solution $A_0, A, \phi, N$ of \eqref{eom3},\eqref{eomCG1},\eqref{eomCG3}, and \eqref{eomCG2P} in $C^\infty([0, T^*) \times \mathbb{R}^2)$, where $A(t, \cdot)$, $\phi(t, \cdot)$, and $N(t, \cdot)$ are compactly supported. We assume $0 < T^* < \infty$ such that by Preposition \ref{prepos1}, $\mathcal{J}(t)$ is uniformly bounded in $[0, T^*)$. Then, we can define the higher energy norm as follows:
\begin{eqnarray}
    \mathcal{J}^{(1)}(t) \equiv \|\partial A(t, \cdot)\|_{H^1(\mathbb{R}^2)} + \|\partial \phi(t, \cdot)\|_{H^1(\mathbb{R}^2)}+\|\partial N(t, \cdot)\|_{H^1(\mathbb{R}^2)}.
\end{eqnarray}
\begin{preposition}\label{prepohihger}
    Suppose that the solution $A_0, A, \phi, N$ of \eqref{eom3},\eqref{eomCG1},\eqref{eomCG3}, and \eqref{eomCG2P}  in $C^\infty([0, T^*) \times \mathbb{R}^2)$ satisfies the assumption made above. Then, $\mathcal{J}^{(1)}(t)$ is uniformly bounded in $[0, T^*)$.
\end{preposition}
\begin{proof}
   Considering the fundamental energy estimate applied to the first derivatives of $A$, $N$, and $\phi$, we have
\begin{eqnarray}
    \|{{\partial }^{2}}A(T,\cdot ){{\|}_{{{L}^{2}}({{\mathbb{R}}^{2}})}}\text{ }\le \|{{\partial }^{2}}A(0,\cdot ){{\|}_{{{L}^{2}}({{\mathbb{R}}^{2}})}}+\int_{0}^{T}{\|\Box \partial A(}t,\cdot ){{\|}_{{{L}^{2}}({{\mathbb{R}}^{2}})}}dt
\end{eqnarray}
\begin{eqnarray}
    \|{{\partial }^{2}}\phi (T,\cdot ){{\|}_{{{L}^{2}}({{\mathbb{R}}^{2}})}}\text{ }\le \|{{\partial }^{2}}\phi (0,\cdot ){{\|}_{{{L}^{2}}({{\mathbb{R}}^{2}})}}+\int_{0}^{T}{\|\Box \partial \phi (}t,\cdot ){{\|}_{{{L}^{2}}({{\mathbb{R}}^{2}})}}dt
\end{eqnarray}
\begin{eqnarray}
    \|{{\partial }^{2}}N(T,\cdot ){{\|}_{{{L}^{2}}({{\mathbb{R}}^{2}})}}\text{ }\le \|{{\partial }^{2}}N(0,\cdot ){{\|}_{{{L}^{2}}({{\mathbb{R}}^{2}})}}+\int_{0}^{T}{\|\Box \partial N(}t,\cdot ){{\|}_{{{L}^{2}}({{\mathbb{R}}^{2}})}}dt
\end{eqnarray}
Therefore, it suffices to prove the following corollary of Preposition \ref{prepoANphiuniqnes}.
\end{proof}
\begin{preposition}\label{prep7}
    There exists a positive constant $ C$, depend only on $\mathcal{J}_0$ and $ T^*$, such that
\begin{eqnarray}
    \int_{0}^{{{T}^{*}}}{\left( \|\Box \partial A{{\|}_{{{L}^{2}}}}+\|\Box \partial \phi {{\|}_{{{L}^{2}}}}+\|\Box \partial N{{\|}_{{{L}^{2}}}} \right)}dt\le C{{\mathcal{J}}^{(1)}}(0).
\end{eqnarray} 
\end{preposition}
\begin{proof}
    The estimate is derived from part (b) of Preposition \ref{prepoANphiuniqnes}, which is applied to finite difference approximations of partial derivatives.
\end{proof}
Once an estimate for $\mathcal{J}^{(1)}$ is obtained, a subsequent estimate for $\mathcal{J}^{(s)}$ can be given by the following proposition:
\begin{preposition}\label{prep9}
For $s>0$, we define
    \begin{eqnarray}
    \mathcal{J}^{(s)}(t) \equiv \|\partial A(t, \cdot)\|_{H^s(\mathbb{R}^2)} + \|\partial \phi(t, \cdot)\|_{H^s(\mathbb{R}^2)}+\|\partial N(t, \cdot)\|_{H^s(\mathbb{R}^2)}.
\end{eqnarray}
Then, $\mathcal{J}^{(s)}(t)$ is uniformly bounded in $[0, T^*)$.
\end{preposition}
\begin{proof}
    If we also consider the fundamental energy estimate that is applied to the s-th derivatives of $A$, $N$ and $\phi$, then we have
\begin{eqnarray}
    \|{{\partial }^{s}}A(T,\cdot ){{\|}_{{{L}^{2}}({{\mathbb{R}}^{2}})}}\text{ }\le \|{{\partial }^{s}}A(0,\cdot ){{\|}_{{{L}^{2}}({{\mathbb{R}}^{2}})}}+\int_{0}^{T}{\|\Box \partial^{s-1} A(}t,\cdot ){{\|}_{{{L}^{2}}({{\mathbb{R}}^{2}})}}dt
\end{eqnarray}
\begin{eqnarray}
    \|{{\partial }^{s}}\phi (T,\cdot ){{\|}_{{{L}^{2}}({{\mathbb{R}}^{2}})}}\text{ }\le \|{{\partial }^{s}}\phi (0,\cdot ){{\|}_{{{L}^{2}}({{\mathbb{R}}^{2}})}}+\int_{0}^{T}{\|\Box \partial^{s-1} \phi (}t,\cdot ){{\|}_{{{L}^{2}}({{\mathbb{R}}^{2}})}}dt
\end{eqnarray}
\begin{eqnarray}
    \|{{\partial }^{s}}N(T,\cdot ){{\|}_{{{L}^{2}}({{\mathbb{R}}^{2}})}}\text{ }\le \|{{\partial }^{s}}N(0,\cdot ){{\|}_{{{L}^{2}}({{\mathbb{R}}^{2}})}}+\int_{0}^{T}{\|\Box \partial^{s-1} N(}t,\cdot ){{\|}_{{{L}^{2}}({{\mathbb{R}}^{2}})}}dt
\end{eqnarray} 
Since we have preposition \ref{prepohihger} and \ref{prep7}, by using a iteration method along with corollary of Preposition \ref{prepoANphiuniqnes}, it can be shown that $\mathcal{J}^{(s)}(t)$ is uniformly bounded in $[0, T^*)$.
\end{proof}
\begin{preposition}\label{prep10}
    For $s>0$, there exists a constant $C$, depend only on $\mathcal{J}_0$ and $ T^*$, such that
\begin{eqnarray}
    \int_{0}^{{{T}^{*}}}{\left( \|\Box \partial^{s-1} A{{\|}_{{{L}^{2}}}}+\|\Box \partial^{s-1} \phi {{\|}_{{{L}^{2}}}}+\|\Box \partial^{s-1} N{{\|}_{{{L}^{2}}}} \right)}dt\le C{{\mathcal{J}}^{(s-1)}}(0).
\end{eqnarray} 
\end{preposition}
\begin{proof}
The proof can be derived by using an iteration method along with part (b) of Preposition \ref{prepoANphiuniqnes}.
\end{proof}

\section{Proof of the main theorem}\label{sec:MCSH prove}
\begin{enumerate}
    \item \textbf{Uniqueness}: Let $A_0, A, \phi, N, A_0', A', \phi', N'$ be two solutions to \eqref{eom3}, \eqref{eomCG1},\eqref{eomCG3}, and \eqref{eomCG2P} in a slab $[0, T^*] \times \mathbb{R}^2$ associated with the same initial data as in \eqref{initialdata} . Thus $\mathcal{J}_0$ is finite. Suppose these solutions also fulfill the conditions \eqref{i}, \eqref{ii} 
of the main theorem. Then $A_0 = A_0'$, $A = A'$, $\phi = \phi'$ and $N=N'$ in $[0, T^*] \times \mathbb{R}^2$.
\begin{proof}
    The equalities $ A = A' $, $ \phi = \phi' $, $ N = N' $ follow immediately from the Preposition \ref{prepoANphiuniqnes}. In view of Preposition \ref{prepo1}, we then find $ A_0 = A_0'$. 
\end{proof}

\item \textbf{Global existence for compactly smooth supported data}: Consider the initial data given in \eqref{initialdata} for the system of equations \eqref{eom3}, \eqref{eomCG1},\eqref{eomCG3}, and \eqref{eomCG2P}. Let $T^*$ denote the maximal of $T$ such that a $C^\infty$ solution exists on $[0, T^*] \times \mathbb{R}^2$. Then, it holds that $T^* = \infty$.
\begin{proof}
    Suppose, by contradiction, $T^* < \infty$. From the classical local existence result in preposition \ref{local}, it follows that $T^* > 0$. Furthermore, by Proposition \ref{prepohihger}, $\mathcal{J}^{(1)}(t)$ remains uniformly bounded on $[0, T^*)$. Applying Proposition \ref{local} once more and also using the result in preposition \ref{prep7}, the solution can be extended beyond $T^*$, leading to a contradiction.
\end{proof}
\item \textbf{Global existence for data with finite $\mathcal{J}_0$}: Let the initial data be given as in \eqref{initialdata} with $\mathcal{J}_0$ finite. Then, the system of equations \eqref{eom3}, \eqref{eomCG1},\eqref{eomCG3}, and \eqref{eomCG2P} admits a global solution that satisfies conditions \eqref{i} and \eqref{ii} of the main theorem.
\begin{proof}
    Let construct a sequence of data $(a_0^{(m)}(0, \cdot), a_1^{(m)}(0, \cdot), \varphi_0^{(m)}(0, \cdot), \varphi_1^{(m)}(0, \cdot))$,  $n_0^{(m)}(0,\cdot)$  $n_1^{(m)}(0, \cdot) \in C^\infty_c(\mathbb{R}^2)$ with $\text{div}\, a_0^{(m)}(0) = 0$, $\text{div}\, a_1^{(m)}(0) = 0$ such that
    \begin{eqnarray}
        \|a_0^{(m)}(0, \cdot) - a_0(0, \cdot)\|_{H^1} &\to& 0,\\
        \|\varphi_0^{(m)}(0, \cdot) - \varphi_0(0, \cdot)\|_{H^1} &\to& 0,\\
         \|n_0^{(m)}(0, \cdot) - n_0(0, \cdot)\|_{H^1} &\to& 0,\\
        \|a_1^{(m)}(0, \cdot) - a_1(0, \cdot)\|_{L^2} &\to& 0,\\
        \|\varphi_1^{(m)}(0, \cdot) - \varphi_1(0, \cdot)\|_{L^2} &\to& 0,\\
        \|n_1^{(m)}(0, \cdot) - n_1(0, \cdot)\|_{L^2} &\to& 0.
    \end{eqnarray}
Let $(a^{(m)}, n^{(m)}, \varphi^{(m)})$ denote the global solutions obtained from the previous result. By Propositions \ref{prepoANphi}, \ref{prepoANphiuniqnes}, and \ref{prepo1}, this sequence converges to a limit $(A_0, A, N, \varphi)$ that satisfies conditions \eqref{i} and \eqref{ii} of the main theorem.
\end{proof}
\item \textbf{Global existence for higher regularity $H^s$ data}: Suppose that the initial data satisfy the higher regularity assumptions 
\begin{eqnarray}
   &&\nabla {{a}_{\left( 0 \right)}}\in {{H}^{s}}\left( {{\mathbb{R}}^{2}} \right), {{a}_{\left( 1 \right)}}\in {{H}^{s}}\left( {{\mathbb{R}}^{2}} \right),{{\varphi }_{\left( 0 \right)}}\in {{H}^{s+1}}\left( {{\mathbb{R}}^{2}} \right),\notag\\
   &&{{\varphi }_{\left( 1 \right)}}\in {{H}^{s}}\left( {{\mathbb{R}}^{2}} \right),\nabla {{n}_{\left( 0 \right)}}\in {{H}^{s}}\left( {{\mathbb{R}}^{2}} \right), {{n}_{\left( 1 \right)}}\in {{H}^{s}}\left( {{\mathbb{R}}^{2}} \right)~, 
\end{eqnarray}
for some integer $s>0$, then,
\begin{eqnarray}
   A_0, A&\in& {{H}^{s+1}}\left( {{\mathbb{R}}^{2}} \right)~,\notag\\
   {{\partial }_{t}}A_0, {{\partial }_{t}}A&\in& {{H}^{s}}\left( {{\mathbb{R}}^{2}} \right),\notag\\
   \phi,N &\in& {{H}^{s+1}}\left( {{\mathbb{R}}^{2}} \right)~,\notag\\
   {{\partial }_{t}}\phi,\partial_t N &\in& {{H}^{s}}\left( {{\mathbb{R}}^{2}} \right). 
\end{eqnarray}
\begin{proof}
By contradiction again, let assume that $T^* < \infty$. Since by Proposition \ref{prep9}, $\mathcal{J}^{(s)}(t)$ remains uniformly bounded on $[0, T^*)$. Applying Proposition \ref{local} once more along with preposition \ref{prep10}, the solution can be extended beyond $T^*$, this leads to a contradiction.
\end{proof}
\end{enumerate}
\section*{Acknowledgements}
This work is supported by Postdoctoral Research Program BRIN 2024. F. T. A. and B. E. G. also acknowledge ITB Research Grant for financial support.

\section*{Data availibility statement }
The manuscript has no associated data.

\section*{Conflict of interest }
This work does not have any conflicts of interest.

\vskip1cm

\end{document}